\newtheorem{problem}{Problem}
\newcommand{\R}{{\if mm {\rm I}\mkern -3mu{\rm R}\else \leavevmode
\hbox{I}\kern -.17em\hbox{R} \fi}}
\newcommand{\blambda}{\mbox{\boldmath{$\lambda$}}}
\newcommand{\bmu}{\mbox{\boldmath{$\mu$}}}
\newcommand{\bu}{\mbox{\boldmath{$u$}}}
\newcommand{\bv}{\mbox{\boldmath{$v$}}}
\newcommand{\bw}{\mbox{\boldmath{$w$}}}
\newcommand{\bx}{\mbox{\boldmath{$x$}}}
\newcommand{\bxi}{\mbox{\boldmath{$\xi$}}}
\newcommand{\fb}{\mbox{\boldmath{$f$}}}
\newcommand{\bsigma}{\mbox{\boldmath{$\sigma$}}}
\newcommand{\btau}{\mbox{\boldmath{$\tau$}}}
\newcommand{\bvarepsilon}{\mbox{\boldmath{$\varepsilon$}}}
\newcommand{\bnu}{\mbox{\boldmath{$\nu$}}}
\newcommand{\bzero}{\mbox{\boldmath{$0$}}}
\newcommand{\cS}{\mbox{{${\mathcal S}$}}}
\newcommand{\wu}{\mbox{{${\widetilde{u}}$}}}
\newcommand{\wl}{\mbox{{${\widetilde{\lambda}}$}}}
\newcommand{\wf}{\mbox{{${\widetilde{f}}$}}}
\newcommand{\wh}{\mbox{{${\widetilde{h}}$}}}
\newcommand{\wfb}{\mbox{\boldmath{${\widetilde{f}}$}}}
\newtheorem{theorem}{Theorem}[section]
\newtheorem{lemma}[theorem]{Lemma}
\newtheorem{proposition}[theorem]{Proposition}
\newtheorem{example}[theorem]{Example}
\newtheorem{remark}[theorem]{Remark}
\def\real{\mathbb{R}}
\numberwithin{equation}{section}
\author{\it  Mircea Sofonea$^1$
\ and\ 	Andaluzia Matei$^2$ \\[5mm]
$^1$\,\small{Laboratoire de Math\'ematiques et Physique}\\[-1mm]{\small Universit\'e de
	Perpignan Via Domitia}\\[-1mm] \small{52 Avenue Paul Alduy, 66860 Perpignan,
	France}\\[3mm]
$^2$\,\small{Department of Mathematics,
		University of Craiova}\\[-1mm]
	{\small  A.I.~Cuza Street 13, 200585, Craiova, Romania}}
\title{\bf Convergence and Optimization Results for
	a History-dependent  Variational Problem}
\begin{document}
\maketitle

\vskip3mm
\begin{abstract}
We consider a mixed variational problem in real Hilbert spaces, defined on
on the unbounded interval of time $[0,+\infty)$ and  governed by a
history-dependent operator. We state the  unique solvability of the problem, which follows from a general existence and uniqueness result obtained in
\cite{SM-JOGO}. Then, we state and prove a general convergence result. The proof is based on
arguments of monotonicity, compactness, lower semicontinuity and Mosco convergence.
Finally, we consider a general optimization   problem for which  we prove the existence of minimizers.
The mathematical tools developed  in this paper are useful in the analysis  of a large class of nonlinear  boundary value problems   which, in a weak formulation, lead to  history-dependent mixed variational problems. To provide an example, we illustrate our abstract results in the study of a frictional contact  problem for viscoelastic materials with long memory.
\end{abstract}

\medskip \noindent {\bf AMS Subject Classification\,:}  35M86, 35M87, 49J40, 74M15, 74M10.

\bigskip \noindent {\bf Key words\,:}
history-dependent operator, mixed variational problem, Lagrange
multiplier, Mosco convergence, pointwise convergence,  optimization problem, viscoelastic material, frictional contact.

\vskip15mm

\section{Introduction} \setcounter{equation}0

Mixed variational problems involving Lagrange multipliers provide a useful framework in which a large number
of problems with or without unilateral constraints can be cast and can
be solved numerically. They are intensively used in Solid and Contact Mechanics as well as in many engineering
applications.
Existence and uniqueness results in the study of stationary
mixed variational problems can be found in \cite{Cea,ET,HHN,HHNL,LGT}, for instance.
Recently, there is an interest in the study of time-dependent mixed variational problems involving a special case of operators, the so-caled history-dependent operators. Such kind of operators arise in Solid and Contact Mechanics and describe memory effects in both the constitutive law and the interface boundary conditions. Reference in the field are \cite{SM11, SM, SMBOOK}.
The analysis of
 various mixed variational problems associated to mathematical models which describe the contact between a deformable  body and a foundation
 can be found in \cite{HR, HMW, HW, BRB,ANZIAM, AOSR,MMS} and,
 more recently, in \cite{AHLMR,BMS12-1, BMS12-2,SMX}.

The current paper represents a continuation of our previous paper \cite{SM-JOGO}. There, we
considered a new class of mixed variational problems with
history-dependent operators and used arguments of saddle point and fixed point in order to
prove its unique solvability.  Here,  we consider a special case of the history-dependent mixed variational problems considered in
\cite{SM-JOGO} for which we provide aditional results.
Everywhere in this paper we assume that $X$, $Y$ and $Z$ are real Hilbert spaces endowed with the inner products
$(\cdot,\cdot)_X$, $(\cdot,\cdot)_Y$ and $(\cdot,\cdot)_Z$. The associated norms will be denoted by $\|\cdot\|_X$, $\|\cdot\|_Y$ and $\|\cdot\|_Z$, respectively. Moreover, $0_X$ and $0_Y$ will represent the zero elements of the spaces $X$ and $Y$, and $X\times Y$ is their product space endowed with the canonical inner product. A typical element of $X\times Y$ will be denoted by $(u,\lambda)$.
We also denote by $\mathbb R_+$ the set of nonegative real
numbers, i.e. $\mathbb R_+=[0,+\infty)$, and we use the notation
$C(\mathbb{R}_+;X)$, $C(\mathbb{R}_+;Y)$  and $C(\mathbb{R}_+;Z)$ for the space of
continuous functions defined on $\mathbb{R}_+$ with values in $X$, $Y$
and $Z$, respectively.
Consider three operators $A:X\to X$, $\cS:C(\mathbb R_+;X)\to C(\mathbb R_+;X)$
and $\pi:X\to Z$, a form $b:X\times Y\to\real$, a set $\Lambda\subset Y$ and two functions  $f:\real_+\to Z$,
$h:\real_+\to X$.
With these data we introduce the following problem.

\medskip
\begin{problem}\label{p} Find the functions $u:\mathbb R_+\to X$ and
$\lambda:\mathbb R_+\to \Lambda$ such that
\begin{eqnarray}
&&\hspace{-5mm}(Au(t),v)_X+(\cS u(t),v)_X+b(v,\lambda(t))=(f(t),\pi v)_Z
\qquad \forall\, v\in X,\label{1}\\[2mm]
&&\hspace{-5mm}\qquad b(u(t),\mu-\lambda(t))\leq
b(h(t),\mu-\lambda(t)) \qquad\forall\, \mu\in
\Lambda,\label{2}
 \end{eqnarray}
for all $t\in \mathbb R_+$.
\end{problem}

\noindent
If $\Lambda\subset Y$ is unbounded, the unique solvability of Problem \ref{p} is a direct consequence of the  existence and uniqueness result obtained in \cite{SM-JOGO}. However, in the present paper we remove this restriction, as explained in the next section.

The current paper has three main objectives. The first one is to  provide  a
continuous dependence result for the solution  to Problem \ref{p} with respect to the data. The second one is to study an optimization problem related to this history-dependent mixed problem. Finally, the third objective is to illustrate how these abstract results can be used in the analysis of  nonlinear boundary value problems which describe the contact of a deformable body with a foundation.

The rest of the paper is structured as follows. In Section~\ref{s2} we present some preliminary material, including a general existence and uniqueness result
for mixed problems of the form (\ref{1})--(\ref{2}). In Section
\ref{s3} we state and prove our main result, Theorem \ref{t1}, which concerns the pointwise convergence  of the solution to Problem \ref{p} with respect to $A$, $\cS$, $b$, $\Lambda$, $f$ and $h$. Its proof is based on arguments of monotonicity, compactness, lower semicontinuity and Mosco convergence.
Then, in  Section \ref{s4} we apply this  convergence result in the study of an optimization problem associated to Problem \ref{p}, for which we prove the existence of minimizers.
Finally, in Section \ref{s5} we consider a mathematical model which describes the frictional contact of a viscoelastic
body with an obstacle. In a variational formulation,  this problem leads to a history-dependent mixed problem of the form (\ref{1})--(\ref{2}). We illustrate how our existence, uniqueness and convergence result can be applied in the analysis  of this nonlinear problem.

\section{Preliminary results}\label{s2}

In this  section we present some preliminary results useful in the study of Problem \ref{p} and, to this end, we consider the
following assumptions.

\begin{eqnarray}
&&\left\{ \begin{array}{ll} A:X\to X\ \mbox{and, moreover:}\\[3mm]
{\rm (a)\ there\ exists\ } m_A>0\ {\rm such\  that}\\
\qquad (Au-Av,\,u-v)_X\geq m_A\|u-v\|_X^2\,\quad \forall\,
u,v\in X;\\[2mm]
{\rm (b)\ there\ exists\ } L_A>0\ {\rm such\  that}\\
\qquad \|Au-Av\|_X\leq L_A\, \|u-v\|_X\,\quad \forall\, u,v\in X.
\end{array}\right. \label{A}
\\[6mm]
&&\left\{ \begin{array}{l}
\cS:C(\mathbb R_+;X)\to C(\mathbb R_+;X)
\mbox{ and for\ each}\ m\in\mathbb{N}\\[3mm]
\ {\rm there\ exists}
\mbox{ and } s_m\geq 0{\rm\ such\ that}\\[3mm]
\quad\|{\cS}u_1(t)-{\cS}u_2(t)\|_X\le s_m
\displaystyle{\int_0^t\|u_1(s)-u_2(s)\|_X\,ds}\ \\[4mm]
\qquad\forall\, u_1,\,u_2\in C(\mathbb R_+;X), \
t\in[0,m].
\end{array}\right. \label{S}
\end{eqnarray}
\begin{eqnarray}
&&\left\{ \begin{array}{ll}
b:X\times Y\to {\mathbb R}
\mbox{ is a bilinear form and, moreover:}\\[2mm]
{\rm (a)\ there\ exists\ } M_b>0\ {\rm such\  that}\\
\qquad |b(v,\mu)|\leq
M_b\|v\|_X\|\mu\|_{Y}\,\quad \forall\,v\in X,\,\mu\in Y;\\[3mm]
{\rm (b)\ there\ exists\ } \alpha_b>0\ {\rm such\  that}\\[1mm]
\qquad\displaystyle\inf_{\mu\in Y, \mu\neq 0_{Y}}\,\sup_{v\in
	X,v\neq 0_X}\,\frac{b(v,\mu)}{\|v\|_X\|\mu\|_{Y}}\geq \alpha_b.
\end{array}\right. \label{b}
\end{eqnarray}

\smallskip
\begin{equation}\label{lam} \Lambda \text{ is a closed convex subset of } Y \text{ that contains }0_{Y}.
\end{equation}

Note that assumption (\ref{A}) shows that the operator $A$ is strongly monotone and Lipschitz continuous. Moreover, following the terminology
introduced in \cite{SM11} and used in a large number of papers, assumption (\ref{S}) shows that $\cS$ is a history-dependent operator.
Finally,  (\ref{b})(a) shows that the bilinear form $b$ is continuous and  (\ref{b})(b) represents the well-known ``inf-sup" condition.

We denote in what follows  by $X\times \Lambda$ the  product of $X$ and $\Lambda$ and we use the notation $C(\mathbb R_+;\Lambda)$,
$C(\mathbb R_+;X\times\Lambda)$ for the set of functions defined on $\mathbb R_+$ with values in $\Lambda$ and $X\times\Lambda$, respectively.
In addition, we recall the following existence and uniqueness results.

\begin{theorem}\label{t0} Assume $(\ref{A})$, $(\ref{b})$, $(\ref{lam})$. Then,
given $g,\,k\in X$, there exists a
	unique pair $(u,\lambda)\in X\times \Lambda$ such that
	\begin{eqnarray}
	&&(Au,v)_X+b(v,\lambda)=(g,\,v)_X\qquad
	\forall\, v\in X,\label{3}\\[2mm]
	&&\quad b(u,\mu-\lambda)\leq b(k,\mu-\lambda)\qquad\forall\,\mu\in
	\Lambda.\label{4}
	\end{eqnarray}

\end{theorem}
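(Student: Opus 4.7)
My plan is to eliminate the variable $u$ from the mixed system by first solving (\ref{3}) as a function of the multiplier, then recasting (\ref{4}) as a standard variational inequality for $\lambda$ on the convex set $\Lambda$. The Lions--Stampacchia theorem will then finish the job, provided the reduced operator on $Y$ is Lipschitz and strongly monotone.

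\textbf{Step 1: Eliminate $u$.} For every fixed $\nu\in Y$, assumption (\ref{A}) says that the operator $A:X\to X$ is strongly monotone and Lipschitz continuous; since $v\mapsto (g,v)_X-b(v,\nu)$ is a continuous linear functional on $X$, the Minty--Browder (Zarantonello) theorem yields a unique $u(\nu)\in X$ satisfying (\ref{3}) with $\lambda$ replaced by $\nu$. Testing the difference of these equations for two multipliers $\nu_1,\nu_2$ with $v=u(\nu_1)-u(\nu_2)$ and using the strong monotonicity of $A$ together with (\ref{b})(a), I obtain the Lipschitz estimate $\|u(\nu_1)-u(\nu_2)\|_X\le (M_b/m_A)\,\|\nu_1-\nu_2\|_Y$.

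\textbf{Step 2: Reduced VI on $\Lambda$.} Using the Riesz representation theorem in $Y$, I define $\psi:Y\to Y$ by $(\psi(\nu),\mu)_Y=-b(u(\nu)-k,\mu)$ for all $\mu\in Y$. The inequality (\ref{4}) is then equivalent to the classical VI
\begin{equation*}
\lambda\in\Lambda,\qquad (\psi(\lambda),\mu-\lambda)_Y\ge 0\qquad\forall\,\mu\in\Lambda.
\end{equation*}
Lipschitz continuity of $\psi$ follows immediately from (\ref{b})(a) and Step~1 with constant $M_b^2/m_A$. The delicate point is the strong monotonicity of $\psi$. Using (\ref{3}) to write $b(v,\nu_1-\nu_2)=-(Au(\nu_1)-Au(\nu_2),v)_X$ for all $v\in X$ and setting $v=u(\nu_1)-u(\nu_2)$ gives
\begin{equation*}
(\psi(\nu_1)-\psi(\nu_2),\nu_1-\nu_2)_Y=(Au(\nu_1)-Au(\nu_2),u(\nu_1)-u(\nu_2))_X\ge m_A\,\|u(\nu_1)-u(\nu_2)\|_X^2.
\end{equation*}
The main obstacle is to convert the right-hand side into a lower bound in terms of $\|\nu_1-\nu_2\|_Y$: here the inf-sup condition (\ref{b})(b) is essential. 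Indeed, (\ref{b})(b) together with $b(v,\nu_1-\nu_2)=-(Au(\nu_1)-Au(\nu_2),v)_X$ implies $\alpha_b\|\nu_1-\nu_2\|_Y\le \|Au(\nu_1)-Au(\nu_2)\|_X$, and Lipschitz continuity of $A$ in (\ref{A})(b) then yields $\|\nu_1-\nu_2\|_Y\le (L_A/\alpha_b)\|u(\nu_1)-u(\nu_2)\|_X$. Combining gives strong monotonicity of $\psi$ with constant $m_A\alpha_b^2/L_A^2$.

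\textbf{Step 3: Conclude.} Since $\Lambda$ is closed, convex and nonempty (it contains $0_Y$ by (\ref{lam})) and $\psi$ is Lipschitz continuous and strongly monotone on the Hilbert space $Y$, the Lions--Stampacchia theorem delivers a unique $\lambda\in\Lambda$ solving the reduced VI. Setting $u=u(\lambda)$ produces a pair $(u,\lambda)\in X\times\Lambda$ satisfying (\ref{3})--(\ref{4}), and any other solution $(u',\lambda')$ must have $u'=u(\lambda')$ by uniqueness in Step~1, after which uniqueness in the reduced VI forces $\lambda'=\lambda$ and hence $u'=u$. The only real subtlety is the strong monotonicity in Step~2, which is exactly where the inf-sup constant $\alpha_b$ and the Lipschitz constant $L_A$ interact; every other step is a direct application of classical tools.
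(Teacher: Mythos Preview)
Your argument is correct. The reduction to a variational inequality on $\Lambda$ is clean: the key observation that the inf--sup condition (\ref{b})(b) combined with the Lipschitz bound (\ref{A})(b) turns the estimate $(\psi(\nu_1)-\psi(\nu_2),\nu_1-\nu_2)_Y\ge m_A\|u(\nu_1)-u(\nu_2)\|_X^2$ into genuine strong monotonicity on $Y$ is exactly the right idea, and your computation of the constant $m_A\alpha_b^2/L_A^2$ is accurate.

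The paper does not give its own proof of this theorem; it cites \cite{ANZIAM} and \cite{Ma13} and states that the argument there proceeds ``based on arguments of saddle points and the Banach fixed point principle.'' That route typically builds a Lagrangian-type functional and locates a saddle point on $X\times\Lambda$, handling the nonlinearity of $A$ by an outer fixed-point iteration. Your approach is genuinely different: you eliminate $u$ first and push everything into a single variational inequality on $Y$, then invoke Lions--Stampacchia (whose proof is itself a contraction argument, but applied on $Y$ rather than on the product). The saddle-point route has the advantage of exposing the Lagrangian/duality structure and generalises naturally when one wants to add further multipliers or constraints; your reduction is more direct, uses only off-the-shelf VI theory, and makes the role of each constant ($m_A$, $L_A$, $M_b$, $\alpha_b$) completely transparent. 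Both require exactly the same hypotheses, and neither needs $\Lambda$ to be bounded or unbounded.
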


\begin{theorem}\label{t00} Assume $(\ref{A})$--$(\ref{lam})$. Then, given
$g,\,k\in C(\real_+,X)$, there exists a unique pair $(u,\lambda)\in C(\mathbb R_+;X\times\Lambda)$ such that
\begin{eqnarray}
&&\hspace{-5mm}(Au(t),v)_X+(\cS u(t),v)_X+b(v,\lambda(t))=(g(t),v)_X
\qquad \forall\, v\in X,\label{5}\\[2mm]
&&\hspace{-5mm}\qquad b(u(t),\mu-\lambda(t))\leq
b(k(t),\mu-\lambda(t)) \qquad\forall\, \mu\in
\Lambda.\label{6}
\end{eqnarray}
	
\end{theorem}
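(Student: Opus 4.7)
My plan is to combine the stationary existence result of Theorem~\ref{t0} with a fixed point argument tailored to history-dependent operators on $C(\mathbb R_+;X)$. Fix an element $\eta\in C(\mathbb R_+;X)$ and, for each $t\in\mathbb R_+$, apply Theorem~\ref{t0} with data $g(t)-\eta(t)$ and $k(t)$ to obtain a unique pair $(u_\eta(t),\lambda_\eta(t))\in X\times\Lambda$ satisfying
\begin{eqnarray*}
&&(Au_\eta(t),v)_X+b(v,\lambda_\eta(t))=(g(t)-\eta(t),v)_X\qquad\forall\,v\in X,\\[1mm]
&&b(u_\eta(t),\mu-\lambda_\eta(t))\leq b(k(t),\mu-\lambda_\eta(t))\qquad\forall\,\mu\in\Lambda.
\end{eqnarray*}
Any solution $(u,\lambda)$ of (\ref{5})--(\ref{6}) arises this way with $\eta=\cS u$, and conversely, so the problem is equivalent to finding a fixed point of the operator $\Phi:C(\mathbb R_+;X)\to C(\mathbb R_+;X)$ defined by $\Phi\eta=\cS u_\eta$.

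The key preparatory step is a stability estimate for the stationary problem. Taking the difference of the two variational equations with test function $v=u_{\eta_1}(t)-u_{\eta_2}(t)$, adding the two variational inequalities with appropriate choices of $\mu$, and invoking (\ref{A})(a), (\ref{A})(b) together with the inf-sup condition (\ref{b})(b), I expect to derive
\[
\|u_{\eta_1}(t)-u_{\eta_2}(t)\|_X+\|\lambda_{\eta_1}(t)-\lambda_{\eta_2}(t)\|_Y\le c\,\|\eta_1(t)-\eta_2(t)\|_X,
\]
with $c$ depending only on $m_A$, $L_A$, $M_b$, $\alpha_b$, and a similar estimate when $g$ and $k$ vary. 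Applied first with $\eta_1=\eta$, $\eta_2=\eta$ but two nearby time arguments (using continuity of $g,k,\eta$), this will yield $(u_\eta,\lambda_\eta)\in C(\mathbb R_+;X\times\Lambda)$, so that $\Phi$ is well defined.

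Next, for any $\eta_1,\eta_2\in C(\mathbb R_+;X)$ and every $m\in\mathbb N$, the above stability estimate combined with the history-dependent bound (\ref{S}) on $\cS$ gives, for $t\in[0,m]$,
\[
\|\Phi\eta_1(t)-\Phi\eta_2(t)\|_X
\le s_m\!\int_0^t\!\|u_{\eta_1}(s)-u_{\eta_2}(s)\|_X\,ds
\le c\,s_m\!\int_0^t\!\|\eta_1(s)-\eta_2(s)\|_X\,ds.
\]
This is precisely the history-dependent Lipschitz structure appearing in (\ref{S}), so the standard fixed point theorem for history-dependent operators on $C(\mathbb R_+;X)$ (iterate $\Phi$ and show that some power becomes a contraction on $C([0,m];X)$ with the norm $\|\eta\|_{k,m}=\max_{t\in[0,m]}e^{-kt}\|\eta(t)\|_X$ for $k$ large, then glue over $m$) yields a unique fixed point $\eta^*\in C(\mathbb R_+;X)$. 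Setting $(u,\lambda)=(u_{\eta^*},\lambda_{\eta^*})$ produces the desired solution of (\ref{5})--(\ref{6}), and uniqueness in $C(\mathbb R_+;X\times\Lambda)$ follows from uniqueness of the fixed point together with the uniqueness provided by Theorem~\ref{t0} at each $t$.

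The main obstacle is the stability estimate: because (\ref{4}) is an inequality rather than an equation, controlling $\lambda_{\eta_1}-\lambda_{\eta_2}$ requires a careful use of the inf-sup condition (\ref{b})(b) after eliminating the $\lambda$-terms, and then bootstrapping to control $u_{\eta_1}-u_{\eta_2}$ via strong monotonicity. Once this estimate is in hand, the pointwise continuity of $(u_\eta,\lambda_\eta)$ and the history-dependent Lipschitz property of $\Phi$ are immediate consequences, and the fixed point machinery concludes the proof.
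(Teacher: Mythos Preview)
Your proposal is correct and follows essentially the same route the paper indicates: the paper states that Theorem~\ref{t00} is obtained from Theorem~\ref{t0} combined with a fixed point argument for history-dependent operators (from \cite{SAM}), which is precisely your scheme of freezing $\eta=\cS u$, solving the stationary problem, and finding a fixed point of $\Phi\eta=\cS u_\eta$. The stability estimate you flag as the main obstacle is exactly what the paper establishes in Proposition~\ref{pr1} (specialized to $\pi=\mathrm{id}$, $Z=X$), so your anticipated difficulty is already resolved there.
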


Theorem \ref{t0} corresponds to Theorem 5.2 in \cite{ANZIAM} for $\Lambda$ unbounded and to Theorem 2.1 in \cite{Ma13} for $\Lambda$ bounded. Its proof is based on arguments of saddle points and the Banach fixed point principle.
The proof of Theorem \ref{t00} can be carried out in several steps,  based on Theorem \ref{t0} combined with a fixed point argument for history-dependent operators proved in \cite{SAM}.  Recall that Theorem  \ref{t00} represents a particular case of Theorem 2.1 in \cite{SM-JOGO}, where the operator $\cS$ was assumed to depend on both the unknowns $u$ and $\lambda$, and $\Lambda$ was supposed to be  unbounded. Nevertheless, by a slight modification of the proof of Theorem 2.1 in \cite{SM-JOGO}, it follows that Theorem \ref{t00} still remains valid if $\Lambda$ is bounded.

Consider now the following additional assumption.
\begin{equation}\label{pi}
 \pi:X\to Z\ \ {\rm is\ a\ linear\ continuous\ operator}
\end{equation}
which implies that there exists $c_0>0$ such that
\begin{equation}\label{pibis}
\ \|\pi v\|_{Z} \leq c_0\,\|v\|_X\quad \forall\,v\in X.
\end{equation}

We complete Theorem \ref{t0} with the following existence, uniqueness and continuous dependence result.


\medskip
\begin{proposition}\label{pr1} Assume $(\ref{A})$, $(\ref{b})$, $(\ref{lam})$, $(\ref{pi})$. Then,
	given $\eta,\, k\in X$ and $f\in Z$ there exists a
	unique pair $(u,\lambda)\in X\times \Lambda$ such that
	\begin{eqnarray}
	&&(Au,v)_X+(\eta,v)_X+b(v,\lambda)=(f,\,\pi v)_Z\qquad
	\forall\, v\in X,\label{7}\\[1mm]
	&&\quad b(u,\mu-\lambda)\leq b(k,\mu-\lambda)\qquad\forall\,\mu\in
	\Lambda.\label{8}
	\end{eqnarray}
	In addition, if  $(u_1,\,\lambda_1)$ and $(u_2,\,\lambda_2)$ are
	the solutions of the problem $(\ref{7})$--$(\ref{8})$
	corresponding to the data $\eta_1,\, k_1\in X$, $f_1\in Z$ and $\eta_2,\,k_2\in X$, $f_2\in Z$,
	respectively, then there exists $d_0>0$ which depends only on $m_A$, $L_A$, $M_b$, $\alpha_b$ and $c_0$
	such that
	\begin{equation}\label{9}
	\|u_1-u_2\|_{X}+\|\lambda_1-\lambda_2\|_{Y}\leq
	d_0(\|\eta_1-\eta_2\|_{X}+\|f_1-f_2\|_Z+\|k_1-k_2\|_{X}).
	\end{equation}
\end{proposition}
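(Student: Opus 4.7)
The plan has two parts: existence/uniqueness via reduction to Theorem \ref{t0}, then the stability estimate via the inf-sup condition combined with strong monotonicity.

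For the first part, I would absorb the extra inhomogeneous data into the right-hand side of (\ref{3}). Since $\pi$ is linear continuous and $(\eta,\cdot)_X$ is continuous on $X$, the map $v\mapsto (f,\pi v)_Z-(\eta,v)_X$ is a continuous linear functional on $X$. By the Riesz representation theorem there exists a unique $g\in X$ with $(g,v)_X=(f,\pi v)_Z-(\eta,v)_X$ for all $v\in X$. With this $g$ and the given $k$, Theorem \ref{t0} delivers a unique $(u,\lambda)\in X\times\Lambda$ solving (\ref{3})--(\ref{4}), which is exactly (\ref{7})--(\ref{8}).

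For the stability estimate (\ref{9}), I would write the equation (\ref{7}) and the inequality (\ref{8}) for both solutions and subtract. Testing the subtracted equation with $v=u_1-u_2$, and using the two inequalities with $\mu=\lambda_2$ and $\mu=\lambda_1$ respectively (added together, they yield $-b(u_1-u_2,\lambda_1-\lambda_2)\leq -b(k_1-k_2,\lambda_1-\lambda_2)$), the strong monotonicity (\ref{A})(a) combined with (\ref{pibis}) and (\ref{b})(a) gives
\begin{equation*}
m_A\|u_1-u_2\|_X^2\leq \bigl(\|\eta_1-\eta_2\|_X+c_0\|f_1-f_2\|_Z\bigr)\|u_1-u_2\|_X+M_b\|k_1-k_2\|_X\|\lambda_1-\lambda_2\|_Y.
\end{equation*}
Separately, rearranging the subtracted equation expresses $b(v,\lambda_1-\lambda_2)$ as a combination of terms controlled by $\|v\|_X$ using (\ref{A})(b) and (\ref{pibis}), so the inf-sup condition (\ref{b})(b) yields
\begin{equation*}
\alpha_b\|\lambda_1-\lambda_2\|_Y\leq L_A\|u_1-u_2\|_X+\|\eta_1-\eta_2\|_X+c_0\|f_1-f_2\|_Z.
\end{equation*}

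To finish, I would substitute the second bound into the first and apply Young's inequality to absorb the resulting quadratic term in $\|u_1-u_2\|_X$ on the left-hand side, obtaining a constant multiple of $\|u_1-u_2\|_X\leq C(\|\eta_1-\eta_2\|_X+\|f_1-f_2\|_Z+\|k_1-k_2\|_X)$; feeding this back into the inf-sup bound yields the analogous estimate for $\|\lambda_1-\lambda_2\|_Y$, and summing gives (\ref{9}) with a constant $d_0$ depending only on $m_A$, $L_A$, $M_b$, $\alpha_b$, $c_0$. The main technical nuisance is the cross term $M_b\|k_1-k_2\|_X\|\lambda_1-\lambda_2\|_Y$: because the inequality (\ref{8}) couples $u$ with $\lambda$ through the non-homogeneous term $k$, one cannot decouple the two estimates, and the bookkeeping of constants when applying Young's inequality must be done carefully so that the coefficient in front of $\|u_1-u_2\|_X^2$ on the right remains strictly smaller than $m_A$.
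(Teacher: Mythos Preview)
Your proposal is correct and follows essentially the same route as the paper: existence/uniqueness via Riesz representation and Theorem \ref{t0}, then the two key inequalities (the monotonicity bound from testing with $v=u_1-u_2$ and the inf-sup bound on $\|\lambda_1-\lambda_2\|_Y$), combined by substituting the latter into the former and absorbing via Young's inequality with carefully chosen parameters. Even the technical point you flag---that the cross term $M_b\|k_1-k_2\|_X\|\lambda_1-\lambda_2\|_Y$ forces the two estimates to be coupled and the Young constants chosen so the coefficient of $\|u_1-u_2\|_X^2$ stays below $m_A$---is exactly what the paper does.
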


\begin{proof}
We use assumption (\ref{pi}) and the Riesz representation theorem to define the element $g\in X$ by the equality
	\begin{equation*}
	(g,v)_X=(f,\pi v)_Z-(\eta,v)_X\qquad\forall\, v\in X.
	\end{equation*}
	The existence and uniqueness part in Proposition \ref{pr1} is now a direct consequence of Theorem \ref{t0}.
	
Denote in what follows by $(u_i,\,\lambda_i)$
the solution of the mixed problem $(\ref{7})$--$(\ref{8})$ for the data $\eta=\eta_i,\, k=k_i\in X$, $f=f_i\in Z$, for $i=1,2$.	 Let $v\in X$.
Then, using
	(\ref{7}) it follows that
	\begin{equation}\label{a1}
	(Au_1-Au_2,v)_X+b(v,\lambda_1-\lambda_2)=(f_1-f_2,\pi v)_Z-(\eta_1-\eta_2,v)_X
	\end{equation}
	and, using (\ref{A})(b),  (\ref{pi}) we find that
	\[b(v,\lambda_1-\lambda_2)\le c_0\|f_1-f_2\|_Z\|v\|_X+\|\eta_1-\eta_2\|_X\|v\|_X+L_A\|u_1-u_2\|_X\|v\|_X.\] We now use (\ref{b})(b) and the previous
	inequality to obtain that
	\begin{equation}\label{a2}
	\alpha_b\,\|\lambda_1-\lambda_2\|_{Y}\leq
	c_0\|f_1-f_2\|_Z+\|\eta_1-\eta_2\|_X+L_A\|u_1-u_2\|_X.
	\end{equation}
	
	On the other hand, (\ref{8}) yields
	\begin{equation*}
	b(u_1-u_2,\lambda_2-\lambda_1)\leq b(k_1-k_2,\lambda_2-\lambda_1)
	\end{equation*}
	and, therefore, using condition (\ref{b})(a) we find that
	\begin{equation}\label{a3}
	b(u_1-u_2,\lambda_2-\lambda_1)\leq M_b\,\|k_1-k_2\|_X
	\|\lambda_1-\lambda_2\|_{Y}.
	\end{equation}
	We now take $v=u_1-u_2$ in (\ref{a1}) and use (\ref{a3}) in the
	resulting inequality to deduce that
	\vspace{-2mm}
	\begin{eqnarray*}
	&&(Au_1-Au_2,u_1-u_2)_X\le (f_1-f_2,\pi u_1-\pi u_2)_Z\\ [0mm]
	&&\qquad-(\eta_1-\eta_2,u_1-u_2)_X+M_b\,\|k_1-k_2\|_X
	\|\lambda_1-\lambda_2\|_{Y}.
	\end{eqnarray*}
	
	\noindent Therefore, using the assumptions (\ref{A})(a) and (\ref{pi}) it follows
	that
	\begin{eqnarray}
	&&\label{a4}
	m_A\|u_1-u_2\|_X^2\le c_0\|f_1-f_2\|_Z\|u_1-u_2\|_X\\[0mm]
	&&\qquad+\|\eta_1-\eta_2\|_X\|u_1-u_2\|_X+M_b\|k_1-k_2\|_X
	\|\lambda_1-\lambda_2\|_{Y}.\nonumber
	\end{eqnarray}
	We now use  (\ref{a4}) and (\ref{a2}) together with the elementary inequalities
	\[ab\le \frac{a^2}{2c}+\frac{cb^2}{2}\,,\quad (a+b+c)^2\le 3(a^2+b^2+c^2)\qquad\forall\, a,\ b,\ c>0\]
	to see that\vspace{-1mm}
	\begin{eqnarray}
	&&m_A\|u_1-u_2\|_X^2\leq
	\frac{c_0^2\|f_1-f_2\|^2_X}{2c_1}+c_1\|u_1-u_2\|_X^2\label{a5}\\[1mm]
	&&\qquad+\frac{\|\eta_1-\eta_2\|_X^2}{2c_1}+\frac{M_b^2\|k_1-k_2\|_X^2}{2c_2}+\frac{c_2\|\lambda_1-
		\lambda_2\|_{Y}^2}{2}\,,\nonumber\\[4mm]
	&&\|\lambda_1-\lambda_2\|^2_{Y}\leq
	\frac{3}{\alpha_b^2}\big(c_0^2\|f_1-f_2\|^2_Z+\|\eta_1-\eta_2\|_X^2+L_A^2\|u_1-u_2\|_X^2\big)\,\label{a6}
	\end{eqnarray}
	
	\noindent where $c_1,c_2$ are arbitrary positive
	constants. We now substitute  (\ref{a6}) in (\ref{a5}) and choose $c_1$ and
	$c_2$ such that
	\begin{eqnarray*}
		m_A-{c_1}-\frac{3c_2L_A^2}{2\alpha_b^2}>0.
	\end{eqnarray*}
	In this way we deduce that there exists $c_3>0$,
	which depends only on $m_A$, $L_A$, $M_b$, $\alpha_b$ and $c_0$  such that
	\begin{equation}\label{a7}
	\|u_1-u_2\|_X^2\leq c_3(\|f_1-f_2\|^2_Z+\|\eta_1-\eta_2\|^2_X+\|k_1-k_2\|^2_X).
	\end{equation}
	Finally, using (\ref{a6}) and (\ref{a7}), after some algebra we obtain
	(\ref{9}) with $d_0>0$ depending only on $m_A$, $L_A$, $M_b$, $\alpha_b$ and $c_0$, which concludes the proof.
\end{proof}

Next, we introduce the following assumptions on the data $f$ and $h$ of Problem \ref{p}.
\begin{eqnarray}
&&\label{f}f\in C(\mathbb R_+;Z),\\ [0mm]
&&\label{h} h\in C(\mathbb R_+;X).
\end{eqnarray}
We have the following existence and uniqueness result.

\begin{proposition}\label{pr2} Assume $(\ref{A})$--$(\ref{lam})$, $(\ref{pi})$, $(\ref{f})$ and  $(\ref{h})$.
	Then Problem
	$\ref{p}$ has a unique solution $(u,\lambda)$. Moreover, the
	solution satisfies $(u,\lambda)\in C(\mathbb R_+; X\times\Lambda)$.
\end{proposition}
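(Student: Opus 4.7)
The plan is to reduce Problem \ref{p} to the mixed history-dependent problem $(\ref{5})$--$(\ref{6})$ treated in Theorem \ref{t00}, by absorbing the duality pairing $(f(t),\pi v)_Z$ into an inner product on $X$. This is exactly the device already used in Proposition \ref{pr1} for the stationary case, only now carried out pointwise in time with a continuity check.

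First, for each fixed $t\in\mathbb R_+$, assumption (\ref{pi}) together with the Riesz representation theorem allows one to define a unique element $g(t)\in X$ by
\begin{equation*}
(g(t),v)_X=(f(t),\pi v)_Z\qquad \forall\, v\in X.
\end{equation*}
The next step is to verify that $g\in C(\mathbb R_+;X)$. This is a short computation: for $t_1,t_2\in\mathbb R_+$, taking $v=g(t_1)-g(t_2)$ in the defining equality and using (\ref{pibis}) yields
\begin{equation*}
\|g(t_1)-g(t_2)\|_X\leq c_0\,\|f(t_1)-f(t_2)\|_Z,
\end{equation*}
so continuity of $g$ follows from assumption (\ref{f}). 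Setting $k=h$, assumption (\ref{h}) gives $k\in C(\mathbb R_+;X)$ as required by Theorem \ref{t00}.

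With $g$ and $k$ in hand, Theorem \ref{t00} (whose hypotheses (\ref{A})--(\ref{lam}) are part of our assumptions) applies directly and provides a unique pair $(u,\lambda)\in C(\mathbb R_+;X\times\Lambda)$ satisfying (\ref{5})--(\ref{6}). By the choice of $g$, the equation (\ref{5}) is identical to (\ref{1}) for every $v\in X$ and every $t\in\mathbb R_+$, and since $k=h$ the inequality (\ref{6}) coincides with (\ref{2}). Hence $(u,\lambda)$ is a solution of Problem \ref{p} in $C(\mathbb R_+;X\times\Lambda)$.

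For uniqueness, conversely, any solution $(u,\lambda)$ of Problem \ref{p} satisfies (\ref{5})--(\ref{6}) with the $g$ and $k$ constructed above, so the uniqueness clause of Theorem \ref{t00} forces the solution to be unique. The only point requiring a little care is the continuity-in-time of $g$, but as shown it is a one-line consequence of the continuity of $\pi$; no other obstacle arises, since the rest is just rewriting.
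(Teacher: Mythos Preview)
Your proof is correct and follows exactly the same approach as the paper: define $g(t)\in X$ via Riesz representation so that $(g(t),v)_X=(f(t),\pi v)_Z$, check $g\in C(\mathbb R_+;X)$ from (\ref{f}) and (\ref{pibis}), set $k=h$, and invoke Theorem \ref{t00}. The paper's version is terser (it omits the explicit continuity estimate and the uniqueness remark), but the argument is identical.
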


\begin{proof}
We use assumption (\ref{pi}) and the Riesz representation theorem to define the function $g:\real_+\to X$ by equality
\begin{equation*}
(g(t),v)_X=(f(t),\pi v)_Z\qquad\forall\, v\in X,\ t\in\real_+.
\end{equation*}
Then, we use assumption (\ref{f}) to see that  $g\in C(\mathbb R_+;X)$. Proposition \ref{pr2} is now a direct consequence of Theorem \ref{t00}.
\end{proof}

We end this section by recalling  the following version of the Weierstrass theorem.

\begin{theorem}\label{tW}
	Let $(X,\|\cdot\|_X)$ be a reflexive Banach space, $K$  a nonempty weakly closed
	subset of $X$ and   $J:X\to\mathbb{R}$ a weakly lower semicontinuous function. In addition, assume that either $K$ is bounded or $J$ is coercive,
	i.e., $J(v)\to\infty$ as $\|v\|_X\to\infty$.
	Then, there exists at least one element $u$ such that
	\begin{equation}\label{04}
	u\in K,\qquad J(u)\le J(v)\qquad\forall\, v\in K.
	\end{equation}
\end{theorem}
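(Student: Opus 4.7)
The plan is to apply the direct method of the calculus of variations. First, set $d=\inf_{v\in K}J(v)$, a priori in $[-\infty,+\infty)$ since $K$ is nonempty, and select a minimizing sequence $\{v_n\}\subset K$ such that $J(v_n)\to d$ as $n\to\infty$. The whole argument will rest on extracting a weakly convergent subsequence from $\{v_n\}$ and then using the weak lower semicontinuity of $J$ to pass to the limit.

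The next step is to show that $\{v_n\}$ is bounded in $X$. If $K$ is bounded this is immediate. If $J$ is coercive, I would argue by contradiction: along any subsequence with $\|v_n\|_X\to\infty$, coercivity forces $J(v_n)\to+\infty$, which contradicts the convergence $J(v_n)\to d$ (and also rules out $d=-\infty$ since $\{J(v_n)\}$ cannot then be bounded above while the norms stay bounded). Since $X$ is reflexive, the Eberlein--Smulian theorem furnishes a subsequence, still denoted $\{v_n\}$, and an element $u\in X$ with $v_n\rightharpoonup u$ weakly in $X$. Weak closedness of $K$ gives $u\in K$, while weak lower semicontinuity of $J$ yields
\[
J(u)\leq \liminf_{n\to\infty}J(v_n)=d.
\]
Since $u\in K$ also implies $J(u)\geq d$, we conclude $J(u)=d\in\mathbb{R}$, which simultaneously shows $d>-\infty$ and delivers the minimizer required by (\ref{04}).

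The main subtlety I anticipate is the boundedness argument in the coercive case, where one must carefully combine coercivity with the behavior of $\{J(v_n)\}$ to exclude $\|v_n\|_X\to\infty$. Beyond that, every step reduces to standard facts about reflexive spaces: Eberlein--Smulian extraction, the stability of weakly closed sets under weak limits, and the characterization of weak lower semicontinuity in terms of $\liminf$ along weakly convergent sequences.
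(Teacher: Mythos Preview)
Your argument is correct and is precisely the standard direct method of the calculus of variations. Note, however, that the paper does not actually supply a proof of Theorem~\ref{tW}: it only remarks that the proof ``is based on standard arguments which can be found in many books and surveys'' and cites \cite{Ku,SM}. Your write-up is exactly the kind of argument those references contain, so there is nothing to compare---you have filled in what the paper deliberately omitted.
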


The proof of Theorem \ref{tW}  is based on standard arguments  which can be found in many books and survey as, for instance, \cite{Ku,SM}.

\section{A convergence result}\label{s3}

The solution $(u,\lambda)$ obtained in
Proposition \ref{pr2} depends on $A$, $\cS$, $b$, $\Lambda$, $f$ and  $h$. In this section we state and prove its convergence with respect to these data, which  represents a crucial ingredient in the study of the optimization  problem we shall consider in Section \ref{s4}. Unless stated otherwise, all the sequences we introduce below are indexed upon $n\in\mathbb{N}$ and all the limits, upper and lower limits  are considered as $n\to\infty$, even if we do not mention it explicitly. The symbols ``$\rightharpoonup$"  and ``$\to$"
denote the weak and the strong convergence in various spaces which will be specified. Nevertheless, for simplicity, we write $g_n\to g$ for the convergence in $\mathbb{R}$.

The functional framework is as follows.
For each $n\in\mathbb{N}$ we consider two operators $A_n$ and $S_n$, a form $b_n$, a set $\Lambda_n$ and  two functions $f_n$ and $h_n$
which satisfy assumptions (\ref{A}), (\ref{S}), (\ref{b}), (\ref{lam}), (\ref{f}) and (\ref{h}), respectively, with constants $m_{n}$, $L_n$, $s_m^n$, $M_n$, $\alpha_n$. To avoid any confusion, when used with $n$, we refer to these  assumptions as assumptions $(\ref{A})_n$, $(\ref{S})_n$, $(\ref{b})_n$, $(\ref{lam})_n$ $(\ref{f})_n$ and $(\ref{h})_n$.
Then, if condition (\ref{pi}) is satisfied, we deduce from  Proposition \ref{pr2} that for each $n\in\mathbb{N}$ there exists a unique solution $(u_n,\lambda_n)\in C(\real_+;X\times\Lambda_n)$ for the following mixed variational problem.
\begin{problem}\label{pn} Find the functions $u_n:\mathbb R_+\to X$ and
	$\lambda_n:\mathbb R_+\to \Lambda_n$ such that
	\begin{eqnarray}
&&\hspace{-5mm}(A_nu_n(t),v)_X+(\cS_n u_n(t),v)_X+b_n(v,\lambda_n(t))=(f_n(t),\pi v)_Z
\qquad \forall\, v\in X,\label{5nn}\\[0mm]
&&\hspace{-5mm}\qquad b_n(u_n(t),\mu-\lambda_n(t))\leq
b_n(h_n(t),\mu-\lambda_n(t)) \qquad\forall\, \mu\in
\Lambda_n\label{6nn}
\end{eqnarray}
for all $t\in \mathbb R_+$.
\end{problem}

\medskip
We now  consider the following additional assumptions.
\begin{eqnarray}
	&&\label{cvA}
	\left\{
	\begin{array}{ll}
		\mbox{For each}\ n\in\mathbb{N}\ \mbox{there exist }F_n\ge 0\ \mbox{and}\ \delta_n\ge 0\  \mbox{ such
			that
		}\\[2mm]
		\mbox{(a) } \|A_{n}v-Av\|_X\le F_n(\|v\|_X+\delta_n)\quad\forall\, v\in X;
		\\[3mm]
		\mbox{(b) }\displaystyle\lim_{n\to\infty}F_n=0;\\[3mm]
		\mbox{(c) the sequence}\ \{\delta_n\}\subset\mathbb{R}\ \mbox {is bounded}.
	\end{array}
	\right.\\ [3mm]
	&&\label{bA}\mbox{There exist}\ m_0,\, L_0>0\ \mbox{such that}\ m_n\ge m_0,\ L_n\le L_0\ \ \forall\, n\in\mathbb{N}.
	\\[3mm]
	&&\label{cvS}
		\left\{
		\begin{array}{ll}
		\mbox{For each}\ n,\,m\in\mathbb{N}\ \mbox{there exist }F_n^m\ge 0\ \mbox{and}\ \delta_n^m\ge 0\  \mbox{ such
			that
		}\\[2mm]
		\mbox{(a) } \|S_{n}v(t)-Sv(t)\|_X\le F_n^m(\displaystyle\max_{s\in[0,m]}\|v(s)\|_X+\delta_n^m)\\[2mm]
		\qquad \mbox{for all}\ v\in C(\real_+;X),\ t\in[0,m];
		\\[3mm]
		\mbox{(b) }\displaystyle\lim_{n\to\infty}F_n^m=0,\ \ \forall\,m\in\mathbb{N};\\[3mm]
		\mbox{(c) the sequence}\ \{\delta_n^m\}\subset\mathbb{R}\ \mbox {is bounded},\ \ \forall\,m\in\mathbb{N}.
		\end{array}
		\right.\\ [3mm]
		&&\label{Z}\mbox{For each $m\in\mathbb{N}$ there exists}\ s_m^0>0\ \mbox{such that}\ s_m^n\le s_m^0\ \ \forall\, n\in\mathbb{N}.
	\\[3mm]
	&&\label{cvb}
	\left\{\begin{array}{ll}\mbox{For all sequences}\ \{z_n\}\subset X,\ \{\mu_n\}\subset Y \  \mbox{such that}\\[2mm]
		z_n\rightharpoonup z  \ \mbox{in}\ X,\ \mu_n\rightharpoonup \mu\ \mbox{in}\ Y,\ \mbox{we have}\hspace{26mm} \\ [2mm]
		\limsup\,b_n(w-z_n,\mu_n)\le b(w-z,\mu)\quad\forall\, w\in X.
	\end{array}\right.\\ [3mm]
	&&\label{bb}\mbox{There exist}\ \alpha_0,\, M_0>0\ \mbox{such that}\ \alpha_n\ge \alpha_0,\ M_n\le M_0\ \ \forall\, n\in\mathbb{N}.
	\\[4mm]
	&&\label{cvpi}
	\left\{\begin{array}{ll}\mbox{For all sequences}\ \{v_n\}\subset X\  \mbox{such that}\\[2mm]
		v_n\rightharpoonup v\ \mbox{in}\ X,\  \mbox{we have}
		\quad \pi v_n\to\pi v\quad{\rm in}\quad Y.
	\end{array}\right.\\[3mm]
	&&\label{cvLa}\left\{
	\begin{array}{l}
		\{\Lambda_n\}\ \mbox{\rm converges to}\ \Lambda\ \mbox{\rm in the sense of Mosco, i.e.:}
		\\[2mm]
		\mbox{(a) }
		\mbox{for each $\mu\in \Lambda$ there exists a sequence $\{\mu_n\}$ such that}\\[1mm]
		\qquad\mbox{$\mu_n\in \Lambda_n$  $\forall\,n\in \mathbb{N}$  and $\mu_n\to \mu$ in $Y$;}
		\\[2mm]
		\mbox{(b) }
		\mbox{for each sequence $\{\mu_n\}$ such that}\\[1mm]
		\qquad\mbox{$\mu_n\in \Lambda_n$ $\forall\,n\in \mathbb{N}$  and $\mu_n\rightharpoonup \mu$ in $Y$, we have}\ \mu\in \Lambda.
	\end{array}\right.
	\\[3mm]
&&\label{cvvf}\left\{\begin{array}{ll}
	{\rm (a)}\ f_{n}(t)\rightharpoonup f(t)\quad {\rm in}\quad Y\quad{\rm as}\quad n\to\infty,\ \ \forall\,t\in\mathbb{R}_+;\\[2mm]
	{\rm (b)}\ 	
	\mbox{\rm for each}\ m\in\mathbb{N}\ \mbox{\rm there exists}\ \omega_m>0\  \mbox{\rm such that}\\ [2mm]
	\qquad\|f_n(t)\|_Y\le \omega_m\ \ \forall\, t\in[0,m],\ n\in\mathbb{N}.\\[2mm]
	\end{array}\right.\\[3mm]
&&\label{cvvh}\left\{\begin{array}{ll}
{\rm (a)}\ h_{n}(t)\rightharpoonup h(t)\quad {\rm in}\quad Y\quad{\rm as}\quad n\to\infty,\ \ \forall\,t\in\mathbb{R}_+;\\[2mm]
{\rm (b)}\ 	
\mbox{\rm for each}\ m\in\mathbb{N}\ \mbox{\rm there exists}\ \xi_m>0\  \mbox{\rm such that}\\ [2mm]
\qquad\|h_n(t)\|_Y\le \xi_m\ \ \forall\, t\in[0,m],\ n\in\mathbb{N}.\\[3mm]
\end{array}\right.
\end{eqnarray}

\medskip
Note that assumption (\ref{cvpi}) shows that the linear operator $\pi:X\to Y$ is completely continuous. Details on the convergence of sets in the sense of Mosco used in condition (\ref{cvLa}) can be found in \cite{Mosco}. Such a convergence was used in the recent papers \cite{S1,XS3} in the study of convergence results for elliptic and history-dependent variational-hemivariational inequalities, respectively.

\medskip
\begin{remark}\label{R1}
	It follows from
	assumptions $(\ref{A})_n$ and $(\ref{bA})$ that for each $n\in\mathbb{N}$ the operator $A_n$ satisfies condition $(\ref{A})$ with the constants  $m_0$ and $L_0$.  On the other hand,
	assumptions $(\ref{S})_n$ and $(\ref{Z})$ show that for each $n\in\mathbb{N}$ the operator $\cS_n$ satisfies condition $(\ref{S})$ with the constant  $s_m^0$.
	Finally, assumptions $(\ref{b})_n$ and $(\ref{bb})$ show that for each $n\in\mathbb{N}$ the bilinear form $b_n$ satisfies condition $(\ref{b})$ with the constants  $\alpha_0$ and $M_0$.
\end{remark}

The main result of this section is the following.

\begin{theorem}\label{t1}  Assume  $(\ref{A})$--$(\ref{lam})$,  $(\ref{pi})$,  $(\ref{f}),$
$(\ref{h})$ and, for each $n\in\mathbb{N}$, assume $(\ref{A})_n$--$(\ref{lam})_n$, $(\ref{f})_n$, $(\ref{h})_n$.
Moreover, assume $(\ref{cvA})$--$(\ref{cvvh})$ and denote by $(u_n,\lambda_n)$ and $(u,\lambda)$ the solutions of Problems $\ref{pn}$ and $\ref{p}$, respectively. Then, for all $t\in\real_+$
the following convergences hold:
	\begin{eqnarray}
		&&\label{cvu}u_n(t)\to u(t)\qquad{\rm in}\quad X,\\ [2mm]
		&&\label{cvla}\lambda_n(t)\rightharpoonup \lambda(t)\qquad{\rm in}\quad Y.
	\end{eqnarray}
\end{theorem}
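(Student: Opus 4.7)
The proof proceeds in four steps: (i) uniform boundedness on each bounded time interval $[0,m]$, (ii) extraction of weak subsequential limits together with the Mosco containment $\tilde\lambda(t)\in\Lambda$, (iii) an energy estimate plus a Gronwall bootstrap giving strong convergence of $u_n(t)$ along the subsequence, and (iv) identification by uniqueness of Problem~\ref{p}, transferring convergence to the whole sequence.

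For (i), fix $m\in\mathbb{N}$ and $t\in[0,m]$. Choose $\mu=0_Y\in\Lambda_n$ (allowed by $(\ref{lam})_n$) in (\ref{6nn}) to get $b_n(u_n(t)-h_n(t),\lambda_n(t))\ge 0$, then test (\ref{5nn}) with $v=u_n(t)-h_n(t)$. Using strong monotonicity (uniform $m_0$ from Remark~\ref{R1}), the $n$-uniform bounds on $\|A_n 0_X\|_X$ and $\|\cS_n 0\|_X$ from $(\ref{cvA})(c)$, $(\ref{cvS})(c)$, and $(\ref{cvvf})(b)$--$(\ref{cvvh})(b)$, together with $\|\cS_n u_n(t)\|_X\le C+s_m^0\int_0^t\|u_n(s)\|_X\,ds$, yields $\|u_n(t)\|_X\le C'+(s_m^0/m_0)\int_0^t\|u_n(s)\|_X\,ds$; Gronwall gives a uniform bound on $\|u_n\|$ on $[0,m]$, and the inf-sup estimate ($\alpha_n\ge\alpha_0$) applied to (\ref{5nn}) gives a uniform bound on $\|\lambda_n\|$. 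For (ii), by reflexivity extract a subsequence with $u_n(t)\rightharpoonup\tilde u(t)$ in $X$ and $\lambda_n(t)\rightharpoonup\tilde\lambda(t)$ in $Y$; $(\ref{cvLa})$(b) gives $\tilde\lambda(t)\in\Lambda$.

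For (iii), subtract (\ref{1}) from (\ref{5nn}) tested at $v=u_n(t)-u(t)$ and apply strong monotonicity of $A_n$ to obtain
\[
\tfrac{3m_0}{4}\|u_n(t)-u(t)\|_X^2\le\varepsilon_n(t)+C_m\int_0^t\|u_n(s)-u(s)\|_X^2\,ds,
\]
where $\varepsilon_n(t)\to 0$ pointwise with the individual contributions dealt with as follows: $(Au(t)-A_n u(t),u_n(t)-u(t))_X\to 0$ by $(\ref{cvA})$; $(\cS u(t)-\cS_n u(t),\cdot)_X\to 0$ by $(\ref{cvS})$; for $(f_n(t)-f(t),\pi(u_n(t)-u(t)))_Z$ I split $\pi(u_n(t)-u(t))=\pi(u_n(t)-\tilde u(t))+\pi(\tilde u(t)-u(t))$ so that the first pairing vanishes because $\pi(u_n(t)-\tilde u(t))\to 0$ strongly by $(\ref{cvpi})$ against the bounded $\|f_n(t)-f(t)\|_Z$, and the second vanishes because $f_n(t)-f(t)\rightharpoonup 0$ from $(\ref{cvvf})(a)$; the bilinear-form difference $b(u_n(t)-u(t),\lambda(t))-b_n(u_n(t)-u(t),\lambda_n(t))\to 0$ is obtained by testing (\ref{6nn}) at $\mu_n\in\Lambda_n$, $\mu_n\to\lambda(t)$ from $(\ref{cvLa})$(a), and (\ref{2}) at $\mu=\tilde\lambda(t)\in\Lambda$, combining with $(\ref{cvb})$ and $(\ref{cvvh})(a)$; the integral term is produced by the Lipschitz bound $\|\cS u(t)-\cS u_n(t)\|_X\le s_m\int_0^t\|u_n(s)-u(s)\|_X\,ds$ from $(\ref{S})$, absorbed via Young's and Cauchy--Schwarz inequalities. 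Gronwall's lemma on $[0,m]$ delivers (\ref{cvu}) along the subsequence. For (iv), strong convergence of $u_n$ together with $(\ref{S})$ and $(\ref{cvS})$ gives $\cS_n u_n(t)\to\cS\tilde u(t)$ in $X$, so passing to the limit in (\ref{5nn})--(\ref{6nn}) shows that $(\tilde u,\tilde\lambda)$ solves Problem~\ref{p}; uniqueness (Proposition~\ref{pr2}) forces $\tilde u(t)=u(t)$, $\tilde\lambda(t)=\lambda(t)$, and uniqueness of subsequential limits propagates both (\ref{cvu}) and (\ref{cvla}) to the full sequence.

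The main obstacle is the bilinear-form difference in step~(iii): since $\Lambda_n$ and $\Lambda$ differ while both $b_n$ and $b$ appear, (\ref{6nn}) and (\ref{2}) admit different admissible test elements, and bridging them requires both directions of Mosco convergence simultaneously with the joint weak semicontinuity $(\ref{cvb})$. Once this term is controlled, the history-dependent operator enters only through the benign $\int_0^t\|u_n(s)-u(s)\|_X^2\,ds$ tail, which is dispatched by Gronwall's lemma.
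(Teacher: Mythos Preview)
There is a genuine gap in step~(iii). Your Gronwall inequality
\[
\tfrac{3m_0}{4}\,\|u_n(t)-u(t)\|_X^2\le\varepsilon_n(t)+C_m\int_0^t\|u_n(s)-u(s)\|_X^2\,ds
\]
must be applied on the whole interval $[0,t]$, and after Gronwall you need $\int_0^t \varepsilon_n(s)\,ds\to 0$ along a \emph{single} subsequence. But your argument that $\varepsilon_n(t)\to 0$ rests on the weak subsequential limits $\tilde u(t),\tilde\lambda(t)$ extracted in step~(ii) at the single instant $t$: the split $\pi(u_n(t)-u(t))=\pi(u_n(t)-\tilde u(t))+\pi(\tilde u(t)-u(t))$ requires $u_n(t)\rightharpoonup\tilde u(t)$, and the bilinear-form estimate needs $\tilde\lambda(t)\in\Lambda$ (via Mosco at time $t$) in order to test (\ref{2}) with $\mu=\tilde\lambda(t)$. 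For $s\neq t$, along that same subsequence you have no weak convergence of $u_n(s)$ or $\lambda_n(s)$, no element $\tilde\lambda(s)\in\Lambda$ at your disposal, and therefore no way to show $\varepsilon_n(s)\to 0$. The history-dependent tail thus defeats the time-pointwise subsequence extraction; the integral term you call ``benign'' is precisely what breaks the argument.

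The paper sidesteps this by introducing the auxiliary problem (\ref{5n})--(\ref{6n}) in which $\cS_n u_n$ is replaced by the \emph{fixed} datum $\cS u$. At each $t$ this is a purely static mixed problem, so your compactness/identification strategy can be run \emph{independently at every $t$}, and uniqueness (Proposition~\ref{pr1}) upgrades it to full-sequence convergence $\wu_n(t)\to u(t)$ (Lemmas~\ref{l3}--\ref{l4}). Only then is Gronwall invoked, now on the difference $u_n-\wu_n$, whose estimate (via Proposition~\ref{pr1}) involves solely $\|\cS_n u_n(t)-\cS u(t)\|_X$; the resulting integrand converges pointwise for all $s$ along the full sequence and is uniformly bounded, so dominated convergence finishes the job. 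To repair your direct approach you would need some device---either this auxiliary decoupling, or a diagonal/equicontinuity argument yielding weak convergence at \emph{all} $s\in[0,t]$ along one subsequence---to bridge the pointwise-in-time compactness with the global-in-time Gronwall step.
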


The proof of Theorem \ref{t1} will be carried out in several
steps that we present in what follows. To this end, below in this section we assume that the hypotheses of Theorem \ref{t1} are satisfied
and, for each $n\in\mathbb{N}$, we consider the following auxiliary problems.
\begin{problem}\label{pm} Find  $u_n^0\in X$ and
	$\lambda_n^0\in\Lambda_n$ such that
	\begin{eqnarray}
	&&(A_nu_n^0,v)_X+b_n(v,\lambda_n^0)=0
	\qquad \forall\, v\in X,\label{5m}\\[2mm]
	&&b_n(u_n^0,\mu-\lambda_n^0)\leq 0\qquad\forall\, \mu\in
	\Lambda_n.\label{6m}
	\end{eqnarray}
\end{problem}

\begin{problem}\label{pq} Find the functions $\wu_n:\mathbb R_+\to X$ and
	$\wl_n:\mathbb R_+\to \Lambda_n$ such that
	\begin{eqnarray}
	&&\hspace{-5mm}(A_n\wu_n(t),v)_X+(\cS u(t),v)_X+b_n(v,\wl_n(t))=(f_n(t),\pi v)_Z
	\qquad \forall\, v\in X,\label{5n}\\[2mm]
	&&\hspace{-5mm}\qquad b_n(\wu_n(t),\mu-\wl_n(t))\leq
	b_n(h_n(t),\mu-\wl_n(t)) \qquad\forall\, \mu\in
	\Lambda_n\label{6n}
	\end{eqnarray}
	for all $t\in \mathbb R_+$.
\end{problem}

The first step is given by the
following result.

\medskip
\begin{lemma}\label{l1} For each $n\in\mathbb{N},$ Problem $\ref{pm}$ has a unique solution $(u_n^0,\lambda_n^0)\in X\times\Lambda_n$. Moreover, there exists $a_0>0$ such that
	\begin{equation}\label{x1}
	\| u_n^0\|_X\le a_0,\qquad \| \lambda_n^0\|_Y\le a_0
\qquad\forall\, n\in\mathbb{N}.
	\end{equation}

\end{lemma}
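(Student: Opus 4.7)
The plan is to first establish existence and uniqueness for each fixed $n$ by recognizing Problem~\ref{pm} as a special instance of the stationary mixed system~$(\ref{3})$--$(\ref{4})$, and then extract the uniform bound by testing with the zero element of $\Lambda_n$ and exploiting strong monotonicity.

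For the existence and uniqueness part I would simply apply Theorem~\ref{t0} with $A=A_n$, $b=b_n$, $\Lambda=\Lambda_n$, $g=0_X$ and $k=0_X$. The hypotheses are exactly $(\ref{A})_n$, $(\ref{b})_n$, $(\ref{lam})_n$, which are in force. This yields a unique $(u_n^0,\lambda_n^0)\in X\times\Lambda_n$ solving $(\ref{5m})$--$(\ref{6m})$.

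For the uniform bound I would argue as follows. First, since $0_Y\in\Lambda_n$ by $(\ref{lam})_n$, choosing $\mu=0_Y$ in $(\ref{6m})$ gives $b_n(u_n^0,\lambda_n^0)\ge 0$. Next, setting $v=u_n^0$ in $(\ref{5m})$ yields $(A_nu_n^0,u_n^0)_X=-b_n(u_n^0,\lambda_n^0)\le 0$. Invoking Remark~\ref{R1}, $A_n$ is strongly monotone with constant $m_0$, hence
\[
m_0\|u_n^0\|_X^2\le (A_nu_n^0-A_n0_X,u_n^0)_X\le -(A_n0_X,u_n^0)_X\le \|A_n0_X\|_X\,\|u_n^0\|_X,
\]
so $\|u_n^0\|_X\le \|A_n0_X\|_X/m_0$. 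The uniform boundedness of $\|A_n0_X\|_X$ is the key point, and this is where assumption $(\ref{cvA})$ enters: applied with $v=0_X$ it gives $\|A_n0_X-A0_X\|_X\le F_n\delta_n$, and since $F_n\to 0$ with $\{\delta_n\}$ bounded, we get a uniform bound $\|A_n0_X\|_X\le \|A0_X\|_X+\sup_n F_n\delta_n$. Consequently $\|u_n^0\|_X$ is uniformly bounded.

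For the multiplier $\lambda_n^0$, the inf-sup condition $(\ref{b})_n$ with uniform constant $\alpha_0$ (Remark~\ref{R1}) and equation $(\ref{5m})$ give
\[
\alpha_0\|\lambda_n^0\|_Y\le \sup_{v\in X,\,v\neq 0_X}\frac{b_n(v,\lambda_n^0)}{\|v\|_X}=\sup_{v\in X,\,v\neq 0_X}\frac{-(A_nu_n^0,v)_X}{\|v\|_X}\le \|A_nu_n^0\|_X,
\]
and then Lipschitz continuity of $A_n$ with constant $L_0$ yields $\|A_nu_n^0\|_X\le L_0\|u_n^0\|_X+\|A_n0_X\|_X$, which is uniformly bounded by the previous step. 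Choosing $a_0$ as the larger of the two resulting constants completes the proof. The only delicate point is the uniform control of $\|A_n0_X\|_X$, and assumption $(\ref{cvA})$ is tailor-made for exactly this purpose; the rest is standard mixed-variational bookkeeping.
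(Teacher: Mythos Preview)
Your proof is correct and follows essentially the same route as the paper's: existence and uniqueness via Theorem~\ref{t0}, testing with $\mu=0_Y$ and $v=u_n^0$ to exploit strong monotonicity, controlling $\|A_n0_X\|_X$ via assumption~$(\ref{cvA})$, and then bounding $\|\lambda_n^0\|_Y$ through the inf--sup condition. The only cosmetic difference is that to bound $\|A_nu_n^0\|_X$ you invoke the uniform Lipschitz constant $L_0$ of $A_n$ directly, whereas the paper routes through $A$ using $(\ref{cvA})$(a) and the Lipschitz constant $L_A$ of $A$; both arguments are equivalent.
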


\begin{proof}  The existence and uniqueness part is a direct consequence of Theorem \ref{t0}.
Let $n\in\mathbb{N}$. We use assumption $(\ref{lam})_n$ and test in (\ref{6m}) with $\mu=0_Y$ to obtain that
	 \[b_n(u_n^0,\lambda_n^0)\ge 0.\]
	 We now take $v=u_n^0$ in  (\ref{5m}) and use the previous inequality to see that
	 \[(A_nu_n^0,u_n^0)_X\le0.\]
	 Next, we write $A_nu_n^0=A_nu_n^0-A_n0_X+A_n0_X$ and use assumption $(\ref{A})_n(a)$  to deduce that
	 \begin{equation}\label{10}
	 m_n\| u_n^0\|_X\le \|A_n0_X\|_X.
	 \end{equation}
	
	 On the other hand,  writing $A_n0_X=A_n0_X-A0_X+A0_X$ and using inequality (\ref{cvA})(a) yield
	 \begin{equation}\label{11}
	 \|A_n0_X\|_X\le F_n\delta_n+\|A0_X\|_X.
	 \end{equation}
	 We now combine inequalities (\ref{10}) and (\ref{11}), and use assumption (\ref{bA}) to see that
	 \begin{equation*}
	 \| u_n^0\|_X\le\frac{1}{m_0} \big(F_n\delta_n+\|A0_X\|_X\big).
	 \end{equation*}
	 Finally, we use assumptions (\ref{cvA})(b),(c) to deduce that  the sequence $\{ u_n^0 \}$ is bounded in $X$, i.e., there exists $K_0>0$ which does not depend on $n$ such that
	 \begin{equation}\label{bu}
	 \| u_n^0\|_X\le K_0.
	 \end{equation}

	 Next, we establish the boundedness of $\{ \lambda_n \}$ in $Y$. To this end we use (\ref{5m})  to see that
	 \[b_n(v,\lambda_n^0)=-(A_nu_n^0,v)_X\le\|A_nu_n^0\|_X\|v\|_X\]
	 for all $v\in X$,  which implies that
	 \[ \displaystyle\,\sup_{v\in X,v\neq 0_X}\,\frac{b_n(v,\lambda_n^0)}{\|v\|_X\|\lambda_n^0\|_{Y}}\le
	 \frac{1}{\|\lambda_n^0\|_Y}\|A_nu_n^0\|_X,\]
	 if $\lambda_n^0\ne 0_Y$. Therefore,
	 \[ \displaystyle\inf_{\mu\in Y,
	 	\mu\neq 0_{Y}}\,\sup_{v\in X,v\neq 0_X}\,\frac{b_n(v,\mu)}{\|v\|_X\|\mu\|_{Y}}\le
	 \frac{1}{\|\lambda_n^0\|_Y}\|A_nu_n^0\|_X,\]
	 if $\lambda_n^0\ne 0_Y$.  We now use assumption $(\ref{b})_n$
	 on the bilinear form $b_n$ together with the bound (\ref{bb}) to deduce that
	 \begin{equation}\label{13}
	 \alpha_0\|\lambda_n^0\|_Y\le  \|A_nu_n^0\|_X,
	 \end{equation}
	 both when $\lambda_n^0\ne 0_Y$ and  $\lambda_n^0=0_Y$.
	
	 Next, we use assumptions (\ref{cvA})(a) and (\ref{A})(b)
	 to see that
	 \begin{eqnarray*}
	 	&&\|A_nu_n^0\|_X\le \|A_nu_n^0-Au_n^0\|_X+\|
	 	Au_n^0\|_X\\ [2mm]
	 	&&\quad\le
	 	F_n(\|u_n^0\|_X+\delta_n)+\|Au_n^0-A0_X\|_X+\|A0_X\|_X\\ [2mm]
	 	&&\qquad\le
	 	F_n(\|u_n^0\|_X+\delta_n)+L_A\|u_n^0\|_X+\|A0_X\|_X.
	 \end{eqnarray*}
	 Therefore, by assumptions (\ref{cvA})(b), (c) and the bound (\ref{bu}) we find that the sequence $\{A_nu_n^0\}$ is bounded
	 in $X$. Using this result, inequality (\ref{13}) implies that
	 the sequence $\{ \lambda_n^0 \}$ is bounded in $Y$, i.e., there exists $P_0>0$ which does not depend on $n$ such that
	 \begin{equation}\label{bla}
	 \| \lambda_n^0\|_Y\le P_0.
	 \end{equation}
	
	 Lemma \ref{l1} is now a direct consequence of the inequalities (\ref{bu}) and (\ref{bla}).
\end{proof}

\medskip
The second step is given by the following result.

\begin{lemma}\label{l2} For each $n\in\mathbb{N},$ Problem $\ref{pm}$ has a unique solution $(\wu_n,\wl_n)\in C(\real_+;X\times\Lambda_n)$. Moreover,
for	each $m\in \mathbb{N},$ there exists $\widetilde{a}_m>0$  such that
	\begin{equation}\label{x2}
	\| \wu_n(t)\|_X\le \widetilde{a}_m,\qquad\  \| \wl_n(t)\|_Y\le \widetilde{a}_m \qquad\forall\, t\in[0,m],\ n\in\mathbb{N}.
	\end{equation}
\end{lemma}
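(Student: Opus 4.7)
The plan is to recognize that Problem \ref{pq}, for each fixed $t\in\mathbb{R}_+$, is a stationary mixed problem of exactly the form $(\ref{7})$--$(\ref{8})$ in Proposition \ref{pr1}, with data $\eta=\cS u(t)\in X$, $k=h_n(t)\in X$, $f=f_n(t)\in Z$. By Remark \ref{R1}, for each $n\in\mathbb{N}$ the operator $A_n$ satisfies $(\ref{A})$ with constants $m_0,L_0$, the form $b_n$ satisfies $(\ref{b})$ with constants $\alpha_0,M_0$, and $\Lambda_n$ satisfies $(\ref{lam})$; assumption $(\ref{pi})$ is also in force. Therefore Proposition \ref{pr1} yields, for each fixed $t$, a unique pair $(\wu_n(t),\wl_n(t))\in X\times\Lambda_n$. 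The continuity of the map $t\mapsto(\wu_n(t),\wl_n(t))$ follows from the Lipschitz estimate $(\ref{9})$ applied with the data at times $t$ and $t'$, together with the fact that $\cS u\in C(\mathbb{R}_+;X)$ (since $u\in C(\mathbb{R}_+;X)$ by Proposition \ref{pr2} and $\cS$ maps $C(\mathbb{R}_+;X)$ into $C(\mathbb{R}_+;X)$), and that $f_n\in C(\mathbb{R}_+;Z)$, $h_n\in C(\mathbb{R}_+;X)$ by assumptions $(\ref{f})_n$ and $(\ref{h})_n$.

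For the uniform bound, the key idea is to compare $(\wu_n(t),\wl_n(t))$ with the solution $(u_n^0,\lambda_n^0)$ of Problem \ref{pm}, which is itself of the form $(\ref{7})$--$(\ref{8})$ corresponding to the trivial data $\eta=0_X$, $k=0_X$, $f=0_Z$ (with the same $A_n,b_n,\Lambda_n$). Applying estimate $(\ref{9})$ to these two solutions, and using that the constant $d_0$ depends only on $m_0,L_0,M_0,\alpha_0,c_0$ (hence is independent of $n$), I would obtain
\begin{equation*}
\|\wu_n(t)-u_n^0\|_X+\|\wl_n(t)-\lambda_n^0\|_Y\le d_0\bigl(\|\cS u(t)\|_X+\|f_n(t)\|_Z+\|h_n(t)\|_X\bigr).
\end{equation*}
Combined with the uniform bound $\|u_n^0\|_X,\|\lambda_n^0\|_Y\le a_0$ from Lemma \ref{l1}, the triangle inequality yields
\begin{equation*}
\|\wu_n(t)\|_X+\|\wl_n(t)\|_Y\le 2a_0+d_0\bigl(\|\cS u(t)\|_X+\|f_n(t)\|_Z+\|h_n(t)\|_X\bigr).
\end{equation*}

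It then remains to bound each term on the right uniformly in $n$ and in $t\in[0,m]$. For $\cS u$, continuity on the compact interval $[0,m]$ gives $\|\cS u(t)\|_X\le\max_{s\in[0,m]}\|\cS u(s)\|_X$, a finite constant depending only on $m$. For $f_n$ and $h_n$, assumptions $(\ref{cvvf})$(b) and $(\ref{cvvh})$(b) supply bounds $\omega_m$ and $\xi_m$ independent of $n$ on $[0,m]$. Putting these together produces a constant $\widetilde a_m>0$, depending only on $m$, $a_0$, $d_0$, $\omega_m$, $\xi_m$ and $\max_{s\in[0,m]}\|\cS u(s)\|_X$, for which $(\ref{x2})$ holds. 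The main obstacle is ensuring the uniformity of the constant $d_0$ in $n$; this is handled precisely by invoking Remark \ref{R1}, which is why the uniform bounds $(\ref{bA})$, $(\ref{Z})$ and $(\ref{bb})$ are built into the hypotheses of Theorem \ref{t1}.
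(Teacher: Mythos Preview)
Your proposal is correct and follows essentially the same route as the paper for the uniform bound: compare $(\wu_n(t),\wl_n(t))$ with $(u_n^0,\lambda_n^0)$ via the Lipschitz estimate $(\ref{9})$ of Proposition~\ref{pr1}, use Remark~\ref{R1} to make $d_0$ independent of $n$, then invoke Lemma~\ref{l1}, assumptions $(\ref{cvvf})$(b), $(\ref{cvvh})$(b) and the continuity of $\cS u$ on $[0,m]$. The only minor difference is in the existence part: the paper simply cites Proposition~\ref{pr2} (viewing $\cS u(\cdot)$ as a trivial history-dependent operator with constant $s_m=0$), whereas you apply Proposition~\ref{pr1} pointwise and then deduce continuity in $t$ from the estimate $(\ref{9})$; both arguments are valid and equivalent in effect.
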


\begin{proof}  The existence and uniqueness part is a direct consequence of Proposition \ref{pr2}.
Let $m,\,n\in\mathbb{N}$ and let $t\in[0,m]$. Note that both Problems
\ref{pm} and \ref{pq} are problems of the form (\ref{7})--(\ref{8}).
Therefore, using Remark \ref{R1} it follows that we are
in the position to use Proposition \ref{pr1} to obtain the estimate
\begin{equation}\label{9p}
\|\wu_n(t)-u_n^0\|_{X}+\|\wl_n(t)-\lambda_n^0\|_{Y}\leq
d_0(\|\cS u(t)\|_{X}+\|f_n(t)\|_{Z}+\|h_n(t)\|_X)
\end{equation}
where $d_0>0$ is a positive contstant which does not depend on $n$. Denote
\[
\zeta_m=\max_{t\in[0,m]}\|Su(t)\|_X.
\]
Then, using inequality (\ref{9p}) and assumptions (\ref{cvvf})(b) and (\ref{cvvh})(b)
we deduce that there exists $K_m>0$ which does not depend on $n$ such that
\begin{equation}\label{9q}
\|\wu_n(t)-u_n^0\|_{X}+\|\wl_n(t)-\lambda_n^0\|_{Y}\leq
K_m.
\end{equation}
We now write
\begin{eqnarray*}
&&\|\wu_n(t)\|_{X}\leq
\|\wu_n(t)-u_n^0\|_{X}+\|u_n^0\|_{X},\\ [2mm]
&&\|\wl_n(t)\|_{Y}\le \|\wl_n(t)-\lambda_n^0\|_{Y}+\|\lambda_n^0\|_{Y}
\end{eqnarray*}
then we use  inequalities (\ref{9q}) and (\ref{x1}) to see that
(\ref{x2}) holds with $\widetilde{a}_m=K_m+a_0$.
\end{proof}

 The next step of the proof consists in the following convergence result.

 \begin{lemma}\label{l3}
 For each  $t\in\real_+,$ there exists a pair $(\wu(t),\wl(t))\in X\times Y$ and a subsequence of the sequence $\{(\wu_n,\wl_n)\}$, still denoted by
 $\{(\wu_n,\wl_n)\}$, such that
  \begin{eqnarray}
  &&\label{cus}\wu_n(t)\rightharpoonup \wu(t)\qquad{\rm in}\quad X,\\ [2mm]
  &&\label{cls}\wl_n(t)\rightharpoonup \wl(t)\qquad{\rm in}\quad Y.
  \end{eqnarray}
  Moreover,
  \begin{equation}
  \label{cusss}\wu_n(t)\to \wu(t)\qquad{\rm in}\quad X.
  \end{equation}
  \end{lemma}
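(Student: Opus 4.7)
The plan is to first extract weakly convergent subsequences from the bounded sequences provided by Lemma \ref{l2}, then upgrade the weak convergence of $\wu_n(t)$ to strong convergence using the uniform strong monotonicity of the operators $A_n$. Fix $t\in\mathbb{R}_+$ and choose $m\in\mathbb{N}$ with $t\in[0,m]$. By Lemma \ref{l2} the sequence $\{(\wu_n(t),\wl_n(t))\}$ is bounded in the reflexive Hilbert space $X\times Y$, so after passing to a subsequence (not relabeled) there exist $\wu(t)\in X$ and $\wl(t)\in Y$ such that (\ref{cus}) and (\ref{cls}) hold.

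Next I would establish (\ref{cusss}) by testing the equation (\ref{5n}) with $v=\wu_n(t)-\wu(t)$ and rewriting as
\begin{eqnarray*}
&&(A_n\wu_n(t)-A_n\wu(t),\,\wu_n(t)-\wu(t))_X \\
&&\quad =(f_n(t),\pi(\wu_n(t)-\wu(t)))_Z-(\cS u(t),\wu_n(t)-\wu(t))_X\\
&&\qquad -b_n(\wu_n(t)-\wu(t),\wl_n(t))-(A_n\wu(t),\wu_n(t)-\wu(t))_X.
\end{eqnarray*}
By Remark \ref{R1}, $A_n$ is strongly monotone with constant $m_0$, so the left side bounds $m_0\|\wu_n(t)-\wu(t)\|_X^2$ from above. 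It therefore suffices to show that the $\limsup$ of the right-hand side is $\le 0$.

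I would then analyze the four terms on the right separately. For the first, (\ref{cvpi}) gives $\pi\wu_n(t)\to\pi\wu(t)$ strongly in $Z$, while $\{f_n(t)\}$ is weakly convergent (hence bounded) in $Z$ by (\ref{cvvf}); a weak-strong pairing argument yields $(f_n(t),\pi(\wu_n(t)-\wu(t)))_Z\to 0$. For the second term, the fixed vector $\cS u(t)\in X$ paired with the weakly null sequence $\wu_n(t)-\wu(t)$ tends to zero. For the fourth term, assumption (\ref{cvA}) combined with the bound (\ref{cvA})(c) gives $A_n\wu(t)\to A\wu(t)$ strongly in $X$, and pairing with the weakly null sequence again gives a zero limit. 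The delicate term is the third: invoking the one-sided Mosco-type condition (\ref{cvb}) with $w=\wu(t)$, $z_n=\wu_n(t)\rightharpoonup\wu(t)$ and $\mu_n=\wl_n(t)\rightharpoonup\wl(t)$ yields
\[
\limsup\,b_n(\wu(t)-\wu_n(t),\wl_n(t))\le b(\wu(t)-\wu(t),\wl(t))=0,
\]
i.e.\ $\liminf b_n(\wu_n(t)-\wu(t),\wl_n(t))\ge 0$, so $\limsup$ of $-b_n(\wu_n(t)-\wu(t),\wl_n(t))$ is $\le 0$. Combining the four estimates gives $\limsup\|\wu_n(t)-\wu(t)\|_X^2\le 0$, which is (\ref{cusss}).

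The main obstacle is this last term involving $b_n$, because only one-sided information is available through (\ref{cvb}) and one has to align the signs carefully so that the ``bad'' direction (which would require $b_n(\wu_n(t)-\wu(t),\wl_n(t))\to 0$) is replaced by the weaker statement $\liminf\ge 0$, which is nonetheless sufficient once it appears with a minus sign on the right-hand side. The remaining terms are routine weak-strong pairings, and no use of the inequality (\ref{6n}) is needed at this stage.
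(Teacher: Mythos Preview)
Your proof is correct and follows essentially the same route as the paper: extract weakly convergent subsequences from the bounds in Lemma~\ref{l2}, test (\ref{5n}) with $v=\wu_n(t)-\wu(t)$, apply the uniform strong monotonicity from Remark~\ref{R1}, and handle the four right-hand terms via (\ref{cvA}), weak convergence, (\ref{cvb}), and (\ref{cvpi})/(\ref{cvvf}) respectively. The only cosmetic difference is that the paper treats the $A_n$ term by an explicit splitting $(A_n\wu(t)-A\wu(t),\cdot)+(A\wu(t),\cdot)$ while you observe directly that $A_n\wu(t)\to A\wu(t)$ strongly; and your detour through $\liminf$ for the $b_n$ term is unnecessary since $-b_n(\wu_n(t)-\wu(t),\wl_n(t))=b_n(\wu(t)-\wu_n(t),\wl_n(t))$ already matches the form in (\ref{cvb}).
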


 \begin{proof}  Let $t\in \real_+$ and let $m$ be such that $t\in[0,m]$.
 We use Lemma \ref{l2} and a standard compactness argument to see that there exists an element $\wu(t)\in X$
 and an element $\wl(t)\in Y$ such that (\ref{cus}) and (\ref{cls}) hold.

 We now prove the strong convergence (\ref{cusss}) and, to this end,
 we test with $v=\wu_n(t)-\wu(t)$ in (\ref{5n}) to deduce that
\begin{eqnarray*}
&&(A_n\wu_n(t),\wu_n(t)-\wu(t))_X+(\cS u(t),\wu_n(t)-\wu(t))_X\\ [2mm]
&&\qquad\quad+b_n(\wu_n(t)-\wu(t),\wl_n(t))=(f_n(t),\pi \wu_n(t)-\pi\wu(t))_Z.
\end{eqnarray*}
Therefore,
\begin{eqnarray*}
	&&(A_n\wu_n(t)-A_n\wu(t),\wu_n(t)-\wu(t))_X=(A_n\wu(t),\wu(t)-\wu_n(t))_X
	\\ [2mm]
	&&\quad+(\cS u(t),\wu(t)-\wu_n(t))_X+b_n(\wu(t)-\wu_n(t),\wl_n(t))\\ [2mm]
	&&\qquad+(f_n(t),\pi \wu_n(t)-\pi\wu(t))_Z
\end{eqnarray*}
and, using Remark \ref{R1} we find that
\begin{eqnarray}
	&&\label{zz}m_0\|\wu_n(t)-\wu(t)\|_X^2\le (A_n\wu(t),\wu(t)-\wu_n(t))_X
	\\ [2mm]
	&&\quad+(\cS u(t),\wu(t)-\wu_n(t))_X+b_n(\wu(t)-\wu_n(t),\wl_n(t))\nonumber\\ [2mm]
	&&\qquad+(f_n(t),\pi \wu_n(t)-\pi\wu(t))_Z.\nonumber
\end{eqnarray}

Next, using  (\ref{cvA})(a) we find that
 \begin{eqnarray*}
 	&&(A_n\widetilde{u}(t),\widetilde{u}(t)-\wu_n(t))_X=
 	(A_n\widetilde{u}(t)-A\widetilde{u}(t),\widetilde{u}(t)-\wu_n(t))_X+
 	(A\widetilde{u}(t),\widetilde{u}(t)-\wu_n(t))_X\\ [2mm]
 	&&\qquad \le \|A_n\widetilde{u}(t)-A\widetilde{u}(t)\|_X
 	\|\wu_n(t)-\widetilde{u}(t)\|_X+(A\widetilde{u}(t),\widetilde{u}(t)-u\wu_n(t))_X\\ [2mm]
 	&&\qquad\qquad \le F_n(\|\widetilde{u}(t)\|_X+\delta_n)\|\wu_n(t)-\widetilde{u}(t)\|_X+
 	(A\widetilde{u}(t),\widetilde{u}(t)-\wu_n(t))_X.
 \end{eqnarray*}
 We now pass to the upper limit in this inequality and use assumptions (\ref{cvA})(b), (c) and the convergence $\wu_n(t) \rightharpoonup {\widetilde{u}(t)}$ in $X$ to see that
 \begin{equation}\label{17}
 \limsup\, (A_n\widetilde{u}(t),\widetilde{u}(t)-\wu_n(t))_X\le 0.
 \end{equation}

 On the other hand, the convergence  (\ref{cus}) implies that
 \begin{equation}\label{19}
 (\cS u(t),\wu(t)-\wu_n(t))_X\to 0.
 \end{equation}
 Moreover, taking  $z_n=\wu_n(t)$, $\mu_n=\wl_n(t)$ and $w=\widetilde{u}(t)$ in  (\ref{cvb}) yields
 \begin{equation}\label{18}
 \limsup\, b_n(\widetilde{u}(t)-\wu_n(t),\wl_n(t))\le 0.
 \end{equation}
 In addition, using assumptions (\ref{cvvf})(a), (\ref{cvpi})
 we find that
 \begin{equation}\label{20}
 (f_n(t),\pi \wu_n(t)-\pi\wu(t))_Z\to 0.
 \end{equation}

 We now pass to the upper limit in the inequality (\ref{zz}) and use (\ref{17})--(\ref{20}) to deduce that
 \[\limsup\,m_0\|\wu_n(t)-\widetilde{u}(t)\|^2_X\le 0\]
 which shows that (\ref{cusss}) holds  and concludes the proof.
 \end{proof}

 \medskip
 The next step completes the statement of Lemma  \ref{l3} and it is as follows.

\begin{lemma}\label{l4}	For each $t\in\real_+,$ the following convergences hold:
	 \begin{eqnarray}
	 &&\label{cuss}\wu_n(t)\to u(t)\qquad{\rm in}\quad X,\\ [2mm]
	 &&\label{clss}\wl_n(t)\rightharpoonup \lambda(t)\qquad{\rm in}\quad Y.
	 \end{eqnarray}
	\end{lemma}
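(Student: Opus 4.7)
The plan is to use the subsequence extracted in Lemma~\ref{l3}, identify its weak limits with $(u(t),\lambda(t))$ by passing to the limit in Problem~\ref{pq} and invoking uniqueness from Proposition~\ref{pr1}, and then upgrade from subsequential to full-sequence convergence by a standard argument.

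Concretely, fix $t\in\mathbb{R}_+$ and let $(\wu(t),\wl(t))\in X\times Y$ be limits produced by Lemma~\ref{l3} along some subsequence, so $\wu_n(t)\to\wu(t)$ strongly in $X$ and $\wl_n(t)\rightharpoonup\wl(t)$ weakly in $Y$. First I would check that $\wl(t)\in\Lambda$: since $\wl_n(t)\in\Lambda_n$ and $\wl_n(t)\rightharpoonup\wl(t)$, this follows directly from the Mosco condition $(\ref{cvLa})(\mathrm{b})$. Next I pass to the limit in the equation (\ref{5n}). For each fixed $v\in X$, the term $(A_n\wu_n(t),v)_X\to(A\wu(t),v)_X$ by splitting $A_n\wu_n(t)-A\wu(t)=(A_n\wu_n(t)-A\wu_n(t))+(A\wu_n(t)-A\wu(t))$ and using $(\ref{cvA})(\mathrm{a})$--$(\mathrm{c})$ together with the Lipschitz bound from~(\ref{A})(b) and the fact that $\{\wu_n(t)\}$ is bounded; the term $b_n(v,\wl_n(t))\to b(v,\wl(t))$ is obtained from~(\ref{cvb}) with $z_n\equiv 0$ and $w=v$, combined with the bilinearity trick of applying~(\ref{cvb}) once more with $-\wl_n(t)$ to upgrade the upper limit to a full limit; finally $(f_n(t),\pi v)_Z\to(f(t),\pi v)_Z$ is immediate from $(\ref{cvvf})(\mathrm{a})$ since $\pi v$ is fixed.

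For the inequality~(\ref{6n}), given any $\mu\in\Lambda$ I use the Mosco condition $(\ref{cvLa})(\mathrm{a})$ to select $\mu_n\in\Lambda_n$ with $\mu_n\to\mu$ strongly in $Y$, substitute into (\ref{6n}), and pass to the limit. Each of the four pieces $b_n(\wu_n(t),\mu_n)$, $b_n(\wu_n(t),\wl_n(t))$, $b_n(h_n(t),\mu_n)$, $b_n(h_n(t),\wl_n(t))$ is handled by first splitting off a bounded error of the form $b_n(\cdot - w, \cdot)$, controlled by $M_0$ from~(\ref{bb}) and the strong convergence of one factor, and then applying~(\ref{cvb}) (with both $\mu_n$ and $-\mu_n$) to the remaining piece. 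This yields, in the limit,
\begin{equation*}
b(\wu(t),\mu-\wl(t))\le b(h(t),\mu-\wl(t))\qquad\forall\,\mu\in\Lambda.
\end{equation*}
Combined with the limit equation and $\wl(t)\in\Lambda$, the pair $(\wu(t),\wl(t))$ solves the stationary mixed problem (\ref{7})--(\ref{8}) with $\eta=\cS u(t)$, $k=h(t)$, and the element of $Z$ equal to $f(t)$. But $(u(t),\lambda(t))$ solves the same system by Problem~\ref{p}, so the uniqueness statement in Proposition~\ref{pr1} forces $\wu(t)=u(t)$ and $\wl(t)=\lambda(t)$.

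To conclude (\ref{cuss}) and (\ref{clss}) for the whole sequence, I observe that the argument above shows every subsequence of $\{(\wu_n(t),\wl_n(t))\}$ admits a further subsequence converging strongly/weakly to the same pair $(u(t),\lambda(t))$; by the standard subsequence principle, the original sequences converge as required. The main obstacle I expect is the passage to the limit in the variational inequality~(\ref{6n}): the bilinear form $b_n$ depends on $n$, one argument converges only weakly while the other converges strongly, and the test element $\mu$ lives in $\Lambda$ rather than $\Lambda_n$, so the Mosco hypothesis~(\ref{cvLa}) must be coordinated carefully with~(\ref{cvb}) and the uniform bound~(\ref{bb}) to extract a genuine limit instead of a one-sided inequality.
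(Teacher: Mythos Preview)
Your proposal is correct and follows essentially the same strategy as the paper: extract a subsequential limit via Lemma~\ref{l3}, verify $\wl(t)\in\Lambda$ using Mosco condition~(\ref{cvLa})(b), pass to the limit in (\ref{5n}) and (\ref{6n}), identify the limit with $(u(t),\lambda(t))$ by the uniqueness in Proposition~\ref{pr1}, and upgrade to full-sequence convergence by the subsequence principle. The only notable difference is that the paper handles the limit in (\ref{6n}) in a single stroke---applying (\ref{cvb}) with $w=0_X$, $z_n=\wu_n(t)-h_n(t)$ and $\mu_n-\wl_n(t)$ in place of $\mu_n$---rather than splitting into four pieces; this is cleaner and sidesteps the one term $b_n(h_n(t),\wl_n(t))$ in your decomposition where neither factor converges strongly (though (\ref{cvb}) still handles that term directly without any splitting).
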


 \medskip\noindent{\it Proof.} Let $t\in\real_+$ and let $m\in\mathbb{N}$ such that $t\in[0,m]$. We first recall that for each $n\in\mathbb{N}$ we have $\wl_n(t)\in\Lambda_n$. Moreover,  using Lemma \ref{l3} it follows that passing to a subsequence, still denoted by $\{\wl_n(t)\}$, the convergence (\ref{cls}) holds.
 Therefore, assumption (\ref{cvLa})(b) implies that
 \begin{equation}
 \label{190}\widetilde{\lambda}(t)\in\Lambda.
 \end{equation}

 Next, we consider a subsequence of the sequence
 $\{(\wu_n,\wl_n)\}$, still denoted by
 $\{\wu_n,\wl_n)\}$, such that (\ref{cus}) and (\ref{cls}) hold.
 Let $n\in\mathbb{N}$ and $v\in X$. We use assumptions (\ref{cvA})(a) and (\ref{A})(b) to see that
 \begin{eqnarray*}
 	&&\|A_n\wu_n(t)-A\widetilde{u}(t)\|_X\le \|A_n\wu_n(t)-A\wu_n(t)\|_X+\|A\wu_n(t)-A\widetilde{u}(t)\|_X\\ [2mm]
 	&&\quad\le
 	F_n(\|\wu_n(t)\|_X+\delta_n)+ L_A\,\|\wu_n(t)-\widetilde{u}(t)\|_X
 \end{eqnarray*}
 and, therefore, assumptions (\ref{cvA})(b),(c) and (\ref{cusss}) imply that
 \begin{equation}
 \label{200}A_n\wu_n(t)\to A\widetilde{u}(t)\quad{\rm in}\quad X.
 \end{equation}
 On the other hand, we write condition (\ref{cvb}) with
 $z_n=0_X$, $\mu_n=\wl_n(t)$, $w=v$, then with $z_n=v$, $\mu_n=\wl_n(t)$ and $w=0_X$ to obtain
 \[\limsup\,b_n(v,\wl_n(t))\le b(v,\widetilde{\lambda}(t))
 \quad{\rm and}\quad
 b(v,\widetilde{\lambda}(t))\le\liminf\,b_n(v,\wl_n(t)), \]
 respectively. These inequalities show that
 \begin{equation}
 \label{21}b_n(v,\wl_n(t))\to b(v,\widetilde{\lambda}(t)).
 \end{equation}
 Finally, note that the convergence (\ref{cvvf})(a) implies that
 \begin{equation}
 \label{22}(f_n(t),\pi v)_Z\to (f(t),\pi v)_Z.
 \end{equation}

 Next, we pass to the limit in  equality (\ref{5n}) and use the convergences (\ref{200})--(\ref{22}) to see that
 \begin{equation}
 \label{23}
 (A\widetilde{u}(t),v)_X+(\cS u(t),v)_X+b(v,\widetilde{\lambda}(t))=(f(t),\pi v)_Z.
 \end{equation}

 Consider now an arbitrary element $\mu\in\Lambda$. Using assumption (\ref{cvLa})(a) we know that there exists a sequence
 $\{\mu_n\}$ such that
 $\mu_n\in \Lambda_n$ for each $n\in \mathbb{N}$  and $\mu_n\to \mu$ in $Y$. This allows to use the inequality (\ref{6n}) to see that
 \[b_n(\wu_n(t)-h_n(t),\mu_n-\wl_n(t))\le 0\]
 which implies that
 \begin{equation}\label{24}
 \liminf\,b_n(\wu_n(t)-h_n(t),\mu_n-\wl_n(t))\le 0.
 \end{equation}
 On the other hand, writing condition (\ref{cvb}) with $w=0_X,z_n=\wu_n(t)-h_n(t)$ and $\mu_n-\wl_n(t)$ instead of $\mu_n,$ we deduce that
 \begin{equation*}
 \limsup\,b_n(-\wu_n(t)+h_n(t),\mu_n-\wl_n(t))\le b(-\widetilde{u}(t)+h(t),\mu-\widetilde{\lambda}(t))
 \end{equation*}or, equivalently,
 \begin{equation}\label{25}
 b(\widetilde{u}(t)-h(t),\mu-\widetilde{\lambda}(t))\le \liminf\,b_n(\wu_n(t)-h_n(t),\mu_n-\wl_n(t)).
 \end{equation}
 We  combine inequalities (\ref{25}) and (\ref{24}) to find that
 \begin{equation}\label{26}
 b(\widetilde{u}(t)-h(t),\mu-\widetilde{\lambda}(t))\le 0.
 \end{equation}

 Finally,  we gather  (\ref{190}), (\ref{23}) and (\ref{26}) to  conclude that the pair $(\wu(t),\wl(t))$ satisfies (\ref{1})--(\ref{2}). On the other hand, it follows from Proposition \ref{pr1}  that there exists a unique solution to the system (\ref{1})--(\ref{2}),  denoted $(u(t),\lambda(t))$. Therefore,  we deduce that ${\widetilde{u}(t)} = u(t)$ and $\widetilde{\lambda}(t)=\lambda(t)$.
 Next, the proofs of Lemmas \ref{l2}--\ref{l3} combined with equalities ${\widetilde{u}(t)} = u(t)$ and $\widetilde{\lambda}(t)=\lambda(t)$ reveal
 the fact that the sequence $\{(\wu_n(t),\wl_n(t))\}$ is bounded in $X\times Y$
 and every  subsequence of $\{(\wu_n(t),\wl_n(t))\}$
 which converges weakly in $X\times Y$ has the same limit $(u(t),\lambda(t))$. Therefore, by a standard argument we deduce  that the whole sequence $\{(\wu_n(t),\wl_n(t)\}$ converges weakly in $X\times Y$ to $(u(t),\lambda(t))$ or, equivalently,  $\wu_n(t) \rightharpoonup {u}(t)$ in $X$ and
 $\wl_n(t)\rightharpoonup {\lambda}(t)$  in $Y$.
 This shows  that  the weak convergence $(\ref{clss})$ holds. Moreover, repeating the arguments in Lemma \ref{l2} we deduce  the strong convergence  $(\ref{cuss})$, which
 concludes the proof.
 \hfill$\Box$

  \medskip We are now in a position to provide the proof of Theorem \ref{t1}.

 \medskip\noindent	
 \begin{proof}
 Let $n\in\mathbb{N}$, $t\in\real_+$ and let $m\in\mathbb{N}$ be such that $t\in[0,m]$. A careful examination of Problems \ref{pn} and \ref{pq} reveals that, with an appropriate notation,
 these problems are governed by a system of the  (\ref{7})--(\ref{8}). Therefore, using Remark \ref{R1}
  we are allowed to apply Proposition \ref{pr1} to obtain the  estimate
\begin{equation}\label{29}
\|u_n(t)-\wu_n(t)\|_{X}+\|\lambda_n(t)-\wl_n(t)\|_{Y}\leq
d_0\|\cS_nu_n(t)-\cS u(t)\|_{X}
\end{equation}
where $d_0$ is a positive constant which depends only on $m_0$, $L_0$, $\alpha_0$, $M_0$ and $c_0$.
Thus,
\begin{equation}\label{30}
\|u_n(t)-\wu_n(t)\|_{X}\leq
d_0\|\cS_nu_n(t)-\cS u(t)\|_{X}.
\end{equation}

Let
\begin{equation}\label{30n}
r_m=\max_{t\in[0,m]}\|u(t)\|_X.
\end{equation}
We  write\  $\cS_nu_n(t)-\cS u(t)=\cS_nu_n(t)-\cS_n u(t)+\cS_nu(t)-\cS u(t)$,
then we use assumption (\ref{cvS}) and Remark \ref{R1}, again,
to deduce that
\begin{equation}\label{31}
\|\cS_nu_n(t)-\cS u(t)\|_{X}\le s_m^0\int_0^t\|u_n(s)-u(s)\|_{X}\,ds+F_n^m(r_m+\delta_n^m).
\end{equation}

On the other hand, we have
\begin{equation}\label{32}
\|u_n(t)-u(t)\|_{X}\leq \|u_n(t)-\wu_n(t)\|_{X}+\|\wu_n(t)-u(t)\|_{X}.
\end{equation}
 Therefore, combining (\ref{32}), (\ref{30}) and  (\ref{31})  we find that
 \begin{eqnarray*}
&& \|u_n(t)-u(t)\|_{X}\leq \|\wu_n(t)-u(t)\|_{X}\\ [2mm]
 &&\qquad+d_0s_m^0\int_0^t\|u_n(s)-u(s)\|_{X}\,ds+d_0F_n^m(r_m+\delta_n^m)\nonumber
 \end{eqnarray*}
 and, using the Gronwall argument yields
 \begin{eqnarray}
 &&\label{33}
 \|{u}_n(t)-u(t)\|_X\le d_0F_n^m(r_m+\delta_n^m)+\|\wu_n(t)-u(t)\|_{X}\\ [2mm]
  &&\quad
 +d_0s_m^0\int_0^te^{d_0s_m^0(t-s)}\Big(d_0F_n^m(r_m+\delta_n^m)+\|\wu_n(s)-u(s)\|_{X}\Big)\,ds.\nonumber
 \end{eqnarray}

 For all $s\in[0,m]$ denote
 \begin{equation}\label{35}
 z_n^m(s)=d_0F_n^m(r_m+\delta_n^m)+\|\wu_n(s)-u(s)\|_{X}.
 \end{equation}
 Then, (\ref{33}) implies that
 \begin{eqnarray}
 &&\label{38}
 \|{u}_n(t)-u(t)\|_X\le z_n^m(t)+d_0s_m^0\int_0^te^{d_0s_m^0(t-s)}z_n^m(s)\,ds.
 \end{eqnarray}
 On the other hand,  (\ref{x2}) and (\ref{30n}) imply that
\[
\|\wu_n(s)-{u}(s)\|_X\le \|\wu_n(s)\|_X+\|{u}(s)\|_X\le
\widetilde{a}_m+r_m
\]
and, therefore, (\ref{35}) yields
\begin{equation}\label{util}
|z_n^m(s)|\le d_0F_n^m(r_m+\delta_n^m)+\widetilde{a}_m+r_m\qquad\forall\, s\in[0,m].
\end{equation}
We now use assumption (\ref{cvS})(b), (c)  and Lemma \ref{l4} to see that the sequence of functions $\{z_n^m\}$ is bounded as $n\to\infty$
and converges  to zero, for all $s\in[0,m]$, i.e.,
\begin{eqnarray}
&&\hspace{-6mm}\mbox{there exists\ $Z_m>0$\ such that}\ \  | z_n^m(s)|_X\le Z_m\quad\forall s\in[0,m],\ n\in\mathbb{N}, \label{zz1}\\ [2mm]
&&\hspace{-6mm}z_n^m(s)\to 0\quad{\rm as}\ \ n\to\infty,\ \ \ \forall\,s\in[0,m].\label{zz2}
\end{eqnarray}
Therefore, we are in a position  to use the Lebesgue theorem in order to  see that
 \begin{eqnarray}
 &&\label{r1n}
 \int_0^te^{d_0s_m^0(t-s)}z_n^m(s)\,ds\to 0,
 \end{eqnarray}
 and, moreover,
 \begin{eqnarray}
 &&\label{r2}z_n^m(t)\to 0\qquad{\rm as}\ n\to\infty.
 \end{eqnarray}
 We now use the convergences (\ref{r1n}), (\ref{r2}) and inequality (\ref{38}) to deduce that (\ref{cvu}) holds.

 Finally, note that (\ref{38}), (\ref{util}) and the convergence (\ref{cvu})  allows us to use the Lebesgue theorem, again, in order to see that
 \begin{eqnarray*}\label{33p}
 \int_0^t\|u_n(s)-u(s)\|_{X}\,ds\to 0
 \end{eqnarray*}
 and, in addition,  assumption (\ref{cvS})(b), (c)  yield
 \begin{eqnarray*}
 F_n^m(r_m+\delta_n^m)\to 0.
 \end{eqnarray*}
 These two convergences combined with (\ref{31}) imply that
 \begin{equation*}\label{31n}
 \|\cS_nu_n(t)-\cS u(t)\|_{X}\to 0
 \end{equation*}
 and, using inequality (\ref{29}) we deduce that
 \begin{equation}\label{40}
 \|\lambda_n(t)-\wl_n(t)\|_{Y}\to0.
 \end{equation}
 We now write $\lambda_n(t)-\lambda(t)=\lambda_n(t)-\wl_n(t)+\wl_n(t)-\lambda(t)$
 then we use the strong convergence (\ref{40}) and the weak convergence (\ref{clss}) to find that
  (\ref{cvla}) holds, which concludes the proof.
 \end{proof}

 Note that Theorem \ref{t1} states a pointwise convergence result, strongly for the first component of the solution $({u,\lambda})$ of Problem \ref{p}, and weakly for the second component. Considering appropriate assumptions on the data which guarantee a strong convergence result for the second component $\lambda$ and/or a uniform convergence result for the solution $({u,\lambda})$ represents an open problem which, clearly, deserves to be studied in the future.

\section{An optimization problem}\label{s4}

In this section we apply Theorem \ref{t1} in the study of a general optimization problem associated to the history-dependent mixed variational problem problem (\ref{1})--(\ref{2}). To this end we consider a reflexive Banach space $W$  endowed with the norm $\|\cdot\|_W$ and a nonempty subset $U\subset W$.
For each $p\in U$ we consider two operators  $A_p$, $\cS_p$, a form $b_p$ and a set $\Lambda_p$  which satisfy assumptions
(\ref{A}),  (\ref{S}), (\ref{b}), (\ref{lam}), respectively, with constants $m_{p}$, $L_p$, $s_m^p$, $M_p$, $\alpha_p$. To avoid any confusion, when used with $p$, we refer to these  assumptions as assumptions $(\ref{A})_p$, $(\ref{S})_p$, $(\ref{b})_p$, $(\ref{lam})_p$. Also, assume that the elements $\widetilde{f}_p$ and
$\widetilde{h}_p$ are given and have the regularity
\begin{eqnarray}
&&\label{51}\wf_p\in Z, \\ [2mm]
&&\label{52}\wh_p\in X.
\end{eqnarray}
Let $\theta$ and $\zeta$ be two functions such that
\begin{eqnarray}
&&\label{53}\theta\in C(\real_+;\real), \\ [2mm]
&&\label{54}\zeta\in C(\real_+;\real)
\end{eqnarray}
and consider the functions $f_p$, $h_p$ defined by
\begin{eqnarray}
&&\label{55}f_p:\real_+\to Z,\qquad f_p(t)=\theta(t)\wf_p\qquad\forall\, t\in\real_+,\\ [2mm]
&&\label{56}h_p:\real_+\to Z,\qquad f_p(t)=\zeta(t)\wh_p\qquad\forall\, t\in\real_+.
\end{eqnarray}
Then, under the previous assumptions, if in addition the condition (\ref{pi}) is satisfied, we deduce from Proposition \ref{pr2} that for each $p\in U$ there exists a unique solution $(u_p,\lambda_p)\in C(\real_+;X\times\Lambda_p)$ to the following problem.

\begin{problem}\label{ppp} Find $u_p:\real_+\to X$ and $\lambda_p:\real_+\to \Lambda_p$
	such that
\begin{eqnarray}
&&\hspace{-5mm}(A_pu_p(t),v)_X+(\cS_p u_p(t),v)_X+b_p(v,\lambda_p(t))=(f_p(t),\pi v)_Z
\qquad \forall\, v\in X,\label{1p}\\[2mm]
&&\hspace{-5mm}\qquad b(u_p(t),\mu-\lambda_p(t))\leq
b_p(h_p(t),\mu-\lambda_p(t)) \qquad\forall\, \mu\in
\Lambda_p\label{2p}
\end{eqnarray}
for all $t\in \mathbb R_+$.	
\end{problem}

Consider
a cost function ${\cal L}:X\times Y\times U \to\mathbb{R}$. We formulate the following optimization problem.

\medskip\noindent

\begin{problem}\label{o}   Given $t\in\real_+$,  find  $p^*\in U$ such that
	\begin{equation}\label{o1}
	{\cal L}(u_{p^*}(t),\lambda_{p^*}(t),p^*)=\min_{p\in U} {\cal L}(u_p(t),\lambda_p(t),p).
	\end{equation}
\end{problem}

\medskip
To solve Problem \ref{o} we consider the following assumptions.
\begin{eqnarray}
&&\label{o2} \qquad U\  \ \mbox{is a nonempty weakly closed subset of}\ \  W.\qquad
\\[4mm]
&&\label{o3}\left\{\begin{array}{l}
\mbox{For all sequences }\{u_n\}\subset X, \{\lambda_n\}\subset Y \mbox{ and
}\{p_n\}\subset
U \mbox{ such that}\\[2mm]
u_n\rightarrow u\mbox{\ \ in\ \ }X, \ \ \lambda_{n}\rightharpoonup
\lambda \mbox{\ \ in\ \ }
Y,\ \ p_{n}\rightharpoonup
p \mbox{\ \ in\ \ }
W,\ \mbox{we have} \\[3mm]
\displaystyle\liminf\,{\cal L}(u_n,\lambda_n,p_n)\ge {\cal L}(u,\lambda,p).
\\[3mm]
\end{array}\right.
\\[4mm]
&&\label{o4}\left\{ \begin{array}{l}
\mbox{ There exists}\ z: U\to\R\ \mbox{such that}
\\[2mm]
{\rm (a)}\ \ {\cal L}(u,\lambda,p)\ge z(p)\quad \forall\,u\in X,\  \lambda\in Y,\ p\in U,\\ [2mm]
{\rm (b)}\ \ \|p_{n}\|_{W}\to+\infty\ \Longrightarrow\ z(p_n)\to \infty.
\end{array}\right.\\[4mm]
&&\label{o5}\qquad U \ \ \mbox {is a bounded subset of} \ \  W.
\end{eqnarray}

	A typical example of function ${\cal L}$ which satisfies conditions $(\ref{o3})$ and $(\ref{o4})$ is obtained by taking
	\[
	{\cal L}(u,\lambda,p)=g(u)+k(\lambda)+z(p)\qquad\forall\, u\in X, \ \lambda\in Y, \ p\in U,
	\]
	where $g:X\to\R_+$ is a lower semicontinuous function, $k:Y\to\R_+$ is a weakly lower semicontinuous function, and $z: U\to \R$ is a weakly lower semicontinuous coercive function, i.e., it satisfies condition $(\ref{o4}){\rm (b)}$.		

\medskip
Our main result in this section is the following.

\medskip
\begin{theorem}\label{t2}  Assume
	$(\ref{A})_p$--$(\ref{lam})_p$ and  $(\ref{51})$--$(\ref{56})$ for each $p\in U$.
	Moreover, assume $(\ref{pi})$, $(\ref{o2})$, $(\ref{o3})$  and either
	$(\ref{o4})$ or $(\ref{o5})$. In addition, assume that
	for each sequence $\{p_n\}\subset U$  such that $p_{n}\rightharpoonup p$ in $W$,
	conditions $(\ref{cvA})$--$(\ref{cvLa})$ are satisfied
	with $A_n=A_{p_n}$, $A=A_p$, $m_n=m_{A_{p_n}}$, $L_n=L_{A_{p_n}}$, $\cS_n=\cS_{p_n}$, $\cS=\cS_p$, $s_m^n=s_m^{p_n}$, $s_m^p=s_m$, $b_{p_n}=b_n$, $\alpha_n=\alpha_{p_n}$, $\alpha=\alpha_p$, $\Lambda_n=\Lambda_{p_n}$, $\Lambda=\Lambda_p$ and
	\begin{eqnarray}
&&\label{57}\wf_{p_n}\rightharpoonup \wf_p \quad\ \ \, {\rm in}\ Z.\\ [2mm]
&&\label{58}\wh_{p_n}\rightharpoonup \wh_p\quad \quad{\rm in}\ X.
\end{eqnarray}
Then, for each $t\in\mathbb{R}_+$,  Problem $\ref{o}$ has  at least one solution $p^*$.
\end{theorem}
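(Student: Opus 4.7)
The plan is to apply the Weierstrass theorem (Theorem \ref{tW}) to the reduced cost functional $J:U\to\mathbb{R}$ defined, for the fixed $t\in\mathbb{R}_+$, by
\[
J(p)={\cal L}(u_p(t),\lambda_p(t),p)\qquad\forall\, p\in U,
\]
where $(u_p,\lambda_p)$ is the unique solution of Problem \ref{ppp} guaranteed by Proposition \ref{pr2}. Since $W$ is a reflexive Banach space and $U$ is weakly closed by $(\ref{o2})$, the two remaining ingredients to check are the weak lower semicontinuity of $J$ on $U$ together with either the boundedness of $U$ (assumption $(\ref{o5})$) or the coercivity of $J$ (assumption $(\ref{o4})$).

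To establish the weak lower semicontinuity, I take an arbitrary sequence $\{p_n\}\subset U$ with $p_n\rightharpoonup p$ in $W$; by $(\ref{o2})$, $p\in U$. The aim is to use Theorem \ref{t1} to pass to the limit in the state variables. Hypotheses $(\ref{cvA})$--$(\ref{cvLa})$ are transferred by assumption, so I only need to verify $(\ref{cvvf})$ and $(\ref{cvvh})$ for the sequences $\{f_{p_n}\}$ and $\{h_{p_n}\}$ constructed via $(\ref{55})$, $(\ref{56})$. Since $\wf_{p_n}\rightharpoonup\wf_p$ in $Z$ by $(\ref{57})$, multiplication by the scalar $\theta(t)$ yields $f_{p_n}(t)=\theta(t)\wf_{p_n}\rightharpoonup \theta(t)\wf_p=f_p(t)$ in $Z$ for every $t\in\mathbb{R}_+$; moreover, by the uniform boundedness principle $\{\wf_{p_n}\}$ is bounded in $Z$, so the continuity of $\theta$ gives a bound $\|f_{p_n}(t)\|_Z\le\omega_m$ on each compact interval $[0,m]$. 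The same argument applies to $\{h_{p_n}\}$ using $(\ref{58})$ and the continuity of $\zeta$. Hence Theorem \ref{t1} yields
\[
u_{p_n}(t)\to u_p(t)\ \ \text{in}\ X,\qquad \lambda_{p_n}(t)\rightharpoonup\lambda_p(t)\ \ \text{in}\ Y.
\]
Combining these convergences with $p_n\rightharpoonup p$ in $W$ and applying assumption $(\ref{o3})$, I obtain
\[
\liminf J(p_n)=\liminf {\cal L}(u_{p_n}(t),\lambda_{p_n}(t),p_n)\ge {\cal L}(u_p(t),\lambda_p(t),p)=J(p),
\]
so $J$ is weakly lower semicontinuous on $U$.

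Finally, if $(\ref{o5})$ holds, $U$ is a bounded weakly closed subset of the reflexive space $W$, and Theorem \ref{tW} directly produces a minimizer $p^*\in U$ of $J$. If instead $(\ref{o4})$ holds, then $J(p)\ge z(p)$ for all $p\in U$ with $z(p_n)\to\infty$ whenever $\|p_n\|_W\to\infty$; hence $J$ is coercive on $U$, and Theorem \ref{tW} again yields a minimizer. In either case, $p^*$ satisfies $(\ref{o1})$, which is the desired conclusion.

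I expect the only nontrivial point to be the verification of the boundedness and pointwise weak convergence requirements of $(\ref{cvvf})$ and $(\ref{cvvh})$ from the weak convergences $(\ref{57})$--$(\ref{58})$; this relies essentially on the product structure $f_p(t)=\theta(t)\wf_p$, $h_p(t)=\zeta(t)\wh_p$ together with the continuity of $\theta$, $\zeta$ and the standard fact that weakly convergent sequences are norm-bounded. All other steps are direct applications of previously established results.
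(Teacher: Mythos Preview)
Your proposal is correct and follows essentially the same approach as the paper: define the reduced functional $J_t(p)={\cal L}(u_p(t),\lambda_p(t),p)$, use Theorem~\ref{t1} together with assumption $(\ref{o3})$ to obtain weak lower semicontinuity of $J_t$ on $U$, and then apply the Weierstrass theorem (Theorem~\ref{tW}) under either the coercivity hypothesis $(\ref{o4})$ or the boundedness hypothesis $(\ref{o5})$. Your verification of $(\ref{cvvf})$ and $(\ref{cvvh})$ from $(\ref{57})$--$(\ref{58})$ via the product structure and the norm-boundedness of weakly convergent sequences is exactly what the paper does, and your explicit remark that $p\in U$ (needed so that $(u_p,\lambda_p)$ is defined) is a detail the paper leaves implicit.
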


\begin{proof} Let $t\in\mathbb{R}_+$ be fixed.
We  consider the function $J_t:U\to\R$ defined by
\begin{equation}\label{Jmn}
J_t(p)=	{\cal L}(u_p(t),\lambda_p(t),p)\qquad\forall\,p\in U
\end{equation}
together with the problem of finding  $p^*\in U$ such that
\begin{equation}\label{o1p}
J_t(p^*)=\min_{p\in U} J_t(p).
\end{equation}

Assume that $\{p_n\}\subset U$ is such that $p_n\rightharpoonup p\mbox{ in }W$.  Then, recall that
(\ref{57}) and (\ref{58}) hold. Note that
$(\ref{51})$, $(\ref{53})$, $(\ref{55})$ and (\ref{57}) show that the functions $f_n=f_{p_n}$ and $f=f_p$
satisfy condition (\ref{cvvf}). Moreover, $(\ref{52})$, $(\ref{54})$, $(\ref{56})$ and (\ref{58}) show that the functions $h_n=h_{p_n}$ and $h=h_p$
satisfy condition (\ref{cvvh}). Thus,  since conditions  $(\ref{cvA})$--$(\ref{cvLa})$,
are satisfied (in the sense prescribed in the statement of the theorem), we are in a position to apply Theorem
\ref{t1} in order to obtain that
$u_{p_n}(t)\to u_p(t)$ in $X$ and  $\lambda_{p_n}(t)\rightharpoonup \lambda_p(t)$ in $Y$. Therefore, using definition (\ref{Jmn})
and assumption (\ref{o3})  we deduce that
\begin{equation*}\label{r1}
\liminf J_t(p_n)=\liminf {\cal L}(u_{p_n}(t),\lambda_{p_n}(t),p_n)\ge {\cal L}(u_{p}(t),\lambda_{p}(t),p)=J_t(p).
\end{equation*}
It follows from here that the function $J_t:U\to\real$  is weakly lower semicontinuous.

Assume now that  (\ref{o4}) holds. Then,
for any sequence $\{p_n\}\subset U$,
we have
\[J_t(p_n)={\cal L}(u_{p_n}(t),\lambda_{p_n}(t),p_n)\ge z(p_n).\]
Therefore, if $\|p_n\|_{W} \to \infty$ we deduce that
$J_t(p_n)\to \infty$ which shows that
$J_t:U\to\real$ is coercive.
Recall also the assumption (\ref{o2}) and the reflexivity of the space $W$. The existence of at least one solution to problem (\ref{o1p}) is now a direct consequence of Theorem \ref{tW}. On the other hand, if we assume that condition (\ref{o5}) is satisfied we are still in a position to apply  Theorem \ref{tW}. We deduce from here that, if either (\ref{o4}) or (\ref{o5})
holds, then there exists at least one solution $p^*\in U$ to the optimization problem (\ref{o1p}).
We now use the definition (\ref{Jmn}) to see that $p^*$ is a solution to Problem \ref{o} which concludes the proof.
\end{proof}

\section{A viscoelastic frictional contact problem}\label{s5}

The abstract results  in Sections \ref{s3}--\ref{s4} are useful in the variational analysis of  mathematical models which describe the equilibrium of deformable bodies in  contact with an obstacle, the so-called foundation. In this section
we illustrate their use  in the study of a  frictional contact model with linearly viscoelastic materials. For the description of additional  models of contact as well as for details on the  notation and preliminaries introduced below   we refer the reader
to  \cite{HS,SST,SM,SMBOOK}.

The physical setting is as follows. We consider a viscoelastic body which occupies a
bounded domain $\Omega\subset\mathbb{R}^d$ ($d=2,3$) with a
Lipschitz continuous boundary $\Gamma$, divided into three
measurable disjoint parts $\Gamma_1$, $\Gamma_2$ and $\Gamma_3$ such that
${ meas}\,(\Gamma_1)>0$.  We assume that the body is fixed on $\Gamma_1$, is acted by given body forces and given surface tractions on $\Gamma_2$, and it is in frictional contact
with an obstacle on $\Gamma_3$.  The time interval of interest is $\real_+=[0,+\infty)$, the contact is bilateral, that is, there is no separation between the body and the foundation, and it is associated to the
Tresca friction law. Then,
the equilibrium of the  body in the physical setting above is described by the following boundary value problem.

\medskip\noindent
\begin{problem}
	\label{Pm}
	{\it Find a displacement field
		$\bu:\Omega\times\real_+\to\mathbb{R}^d$ and a stress field
		$\bsigma:\Omega\times\real_+\to\mathbb{S}^d$ such that}
	\begin{eqnarray}
	\hspace{-16mm}\label{1m} \bsigma(t)=2\beta\bvarepsilon({\bu}(t))+\eta\, {\rm tr}(\bvarepsilon({\bu}(t))I_d+\int_0^te^{-\omega(t-s)}\bvarepsilon({\bu}(s))\,ds\ &{\rm in}\
	&\Omega,\\[3mm]
	\label{2m} {\rm Div}\,\bsigma(t)+\fb_0(t)=\bzero\quad&{\rm
		in}\ &\Omega,\\[3mm]
	\label{3m} \bu(t)=\bzero\quad &{\rm on}\ &\Gamma_1,\\[3mm]
	\label{4m} \bsigma\bnu(t)=\fb_2(t)\quad&{\rm on}\
	&\Gamma_2,\\[3mm]
	\hspace{-8mm}\label{5mm} u_\nu(t)=0,\quad
	\|{\bsigma}_\tau(t)\|\leq g,\quad {\bsigma}_\tau(t)=-g\,\frac{\bu_{\tau}(t)}{\|\bu_{\tau}(t)\|}\ \mbox{ \rm if }\ \bu_{\tau}(t)\neq \bzero\, \ &{\rm on}\ &\Gamma_3,
	\end{eqnarray}
	for all $t\in\real_+$.
\end{problem}

\medskip
Here and below in this section we do not mention the dependence of various functions  with respect to the spatial variable $\bx\in\Omega\cup\Gamma$.  Notation $\mathbb{S}^d$ represents the space of second order symmetric
tensors on $\mathbb{R}^d$ or, equivalently, the space of symmetric
matrices of order $d$, and $I_d$ stands for the unit tensor of $\mathbb{S}^d$. The  inner product and norm on
$\mathbb{R}^d$ and $\mathbb{S}^d$ are defined by
\begin{eqnarray*}
	&&\bu\cdot \bv=u_i v_i\ ,\qquad\ \
	\|\bv\|=(\bv\cdot\bv)^{\frac{1}{2}}\qquad
	\forall \,\bu, \bv\in \mathbb{R}^d,\\[0mm]
	&&\bsigma\cdot \btau=\sigma_{ij}\tau_{ij}\ ,\qquad
	\|\btau\|=(\btau\cdot\btau)^{\frac{1}{2}} \qquad \,\forall\,
	\bsigma,\btau\in\mathbb{S}^d,
\end{eqnarray*}
\noindent  and the zero element of these spaces will be denoted by $\bzero$. Also,  ${\bnu}$ is the outward unit normal at $\Gamma$ and
$u_\nu$, $\bu_\tau$  represent the normal and
tangential components of $\bu$ on $\Gamma$ given by
$u_\nu=\bu\cdot\bnu$ and $\bu_\tau=\bu-u_\nu\bnu$, respectively. Finally, $\sigma_\nu$ and $\bsigma_\tau$ denote  the
normal and tangential stress on $\Gamma$, that is
$\sigma_{\nu}=(\bsigma\bnu)\cdot\bnu$ and $\bsigma_{\tau} =
\bsigma\bnu - \sigma_{\nu}\bnu$.

We now provide a short description of the equations and boundary conditions in Problem
$\ref{pm}$. First, equation (\ref{1m}) represents the viscoelastic constitutive law of the material, in which $\beta$ and $\eta$ represent the Lam\'e coefficients, ${\rm tr\,\btau}$ denotes the trace of the tensor $\btau$,  $\omega$ is a relaxation coefficient and $\bvarepsilon(\bu)$ denotes the linearized strain field.
Equation (\ref{2m}) is
the equation of equilibrium in which $\fb_0$ represents the density of the body forces and ${\rm Div}$ denotes the divergence operator.
Conditions (\ref{3m}), (\ref{4m}) represent the
displacement and traction boundary conditions, respectively, where $\fb_2$ denotes the density of given surface tractions which act on the part $\Gamma_2$ of the boundary. Finally, condition (\ref{5mm}) represents the interface law on the contact surface. Equality $u_\nu(t)=0$ shows that there is no separation between the body and the obstacle during the deformation process, i.e., the contact is bilateral. The rest of the condition in (\ref{5mm}) represent the static version of the Tresca's friction law, in which $g$ denote the friction bound, assumed to be given.

In the study of the contact problem (\ref{1m})--(\ref{5mm}) we
assume that the data satisfy the following conditions.

\begin{eqnarray}
&&\label{be} \beta\ge0,\\
&&\label{et} \eta\ge0,\\
&&\label{om} \omega\ge0,\\
&&\label{f0} \fb_0\in C(\real_+;L^2(\Omega)^d),\\
&&\label{f2}\fb_2\in C(\real_+;L^2(\Gamma_2)^d), \\
&&\label{gm}g\ge0.
\end{eqnarray}

Everywhere below
we use the standard notation for Sobolev and Lebesgue spaces associated to
$\Omega$ and $\Gamma$ and we denote by $\gamma:H^1(\Omega)^d\to L^2(\Gamma)^d$ the trace operator. For each element $\bv\in H^1(\Omega)^d$ we use the notation $v_\nu$ and $\bv_\tau$ for the  normal and tangential components of $\bv$ on $\Gamma$, that is,
$v_\nu=\gamma\bv\cdot\bnu$ and $\bv_\tau=\gamma\bv-v_\nu\bnu$, respectively.
Moreover, we use the notation $\bvarepsilon(\bv)$
for the associated linearized strain field, i.e.,
\[
\bvarepsilon(\bv)=(\varepsilon_{ij}(\bv)),\quad
\varepsilon_{ij}(\bv)=\frac{1}{2}\,(v_{i,j}+v_{j,i}),
\]
where an index that follows a comma denotes the
partial derivative with respect to the corresponding component of $\bx$, e.g.,\ $v_{i,j}=\frac{\partial v_i}{\partial x_j}$.

Next, for the displacement field we consider the
space
\begin{eqnarray*}
	&&X=\{\,\bv\in H^1(\Omega)^d:\  \gamma\bv =\bzero\ \ {\rm on\ \ }\Gamma_1,\quad v_\nu=0\ \ {\rm on\ \ }\Gamma_3\,\}.
\end{eqnarray*}
Since ${ meas}\,(\Gamma_1)>0$, it is well known that  $X$
is a real Hilbert space
endowed with the canonical inner product
\begin{equation}
(\bu,\bv)_X= \int_{\Omega}
\bvarepsilon(\bu)\cdot\bvarepsilon(\bv)\,dx
\end{equation}
and the associated norm
$\|\cdot\|_X$.  It follows from
\cite{ANZIAM} that the space $\gamma(X)$ is a closed subspace of the Hilbert space $\gamma(H^1(\Omega)^d)$ and, therefore, it  is
a Hilbert space, too.
Let $Y$ be its dual (which, in turn,
can be organized as a real Hilbert space) and denote by $\langle\cdot,\cdot\rangle$ the
duality pairing
between $Y$ and $\gamma(X)$.  Recall also that $\gamma(X)$ is continuously embedded in $L^2(\Gamma)^d$. Finally, we need the space $Z=L^2(\Omega)^d\times L^2(\Gamma_2)^d$ equipped with the canonical inner product.

We now introduce
the operators $A:X\to X$, $\cS:C(\real_+,X)\to C(\real_+,X)$ and $\pi:X\to Z$, the form $b:X\times Y \to\real$, the set $\Lambda\subset Y$ and  the function $\fb:\real_+\to Z$ defined by the following equalities:
\begin{eqnarray}
&&\label{Am} (A\bu,\bv)_X=\int_\Omega \Big(2\beta\bvarepsilon({\bu})+\eta\, {\rm tr}(\bvarepsilon({\bu}))I_d\Big)\cdot\bvarepsilon(\bv)\,dx\quad\forall\, \bu,\, \bv\in
X,\\[4mm]
&&\label{Sm} (\cS\bu(t),\bv)_X=\int_\Omega\Big(\int_0^te^{-\omega(t-s)}\bvarepsilon({\bu}(s))\,ds\Big)\cdot\bvarepsilon(\bv)\,dx\\[4mm]
&&\qquad\qquad\quad\qquad\qquad\qquad\ \forall\, \bu\in C(\real_+;X),\ \bv\in
X, \nonumber
\end{eqnarray}
\begin{eqnarray}
&&\label{bm}b(\bv,\,\bmu)=\langle\bmu,
\gamma\bv\rangle,\qquad \forall\, \bv\in X,\ \mu\in Y,\\[4mm]
&&\label{pim}\pi \bv=(\bv,\gamma_2\bv)\qquad\forall\, \bv\in X, \\[4mm]
&&\label{Lam}\Lambda=\Big\{\bmu\in
Y\,:\,\langle\bmu,\,\bxi\rangle\leq g
\int_{\Gamma_3}\|\bxi\|da\quad \forall\, \bxi\in
\gamma(X)\Big\},\\[4mm]
&&\label{ff}\fb(t)=(\fb_0(t),\fb_2(t))\qquad\forall\, t\in\real_+.
\end{eqnarray}

Note that the definition of the operators $A$ and
$\cS$ follows by using Riesz's representation theorem. Moreover,
here and below, $\gamma_2\bv\in L^2(\Gamma_2)^d$ denotes the restriction to $\Gamma_2$ of the trace $\gamma\bv\in L^2(\Gamma)^d$, for any $\bv\in X$.
In addition, the definitions  (\ref{pim}) and
 (\ref{ff}) imply that
\begin{equation}
\label{ffm}(\fb(t),\pi\bv)_Z =\int_{\Omega}\fb_0(t)\cdot\bv\,dx
+\int_{\Gamma_2}\fb_2(t)\cdot\gamma_2\bv\,da\qquad\forall\,\bv\in X,\ t\in\real_+.
\end{equation}

We now introduce a new variable, the Lagrange multiplier, denoted by $\blambda$. It is related to the friction force $\bsigma_{\tau}$ on the contact zone $\Gamma_3$ by equality
\begin{equation}\label{lag}
\langle\blambda(t),\widetilde{\bv}\rangle=-\int_{\Gamma_3}\bsigma_{\tau}(t)\cdot \widetilde{\bv}\,da \qquad \forall\, \widetilde{\bv}\in \gamma(X),\ t\in\real_+.
\end{equation}

A mixed variational formulation of Problem \ref{Pm} can be easily obtained, based
on  equalities (\ref{ffm}), (\ref{lag}) and integration by parts. It can be stated as follows.

\medskip

\begin{problem}\label{pv}
	Find a displacement field $\bu:\real_+\to X$ and a Lagrange multiplier $\blambda:\real_+\to\Lambda$ such that
	\begin{eqnarray}
	&&\label{m1}(A\bu(t),\bv)_{X}+(\cS\bu(t),\bv)_{X}+b(\bv,\blambda(t))=(\fb(t),\,\pi\bv)_Z \quad\forall\, \bv\in X,\\ [2mm]
	&&\label{m2}b(\bu(t),\bmu-\blambda(t))\le 0\quad       \forall\,\bmu\in
	\Lambda
	\end{eqnarray}
	for all $t\in\real_+$.
\end{problem}

\medskip
The unique solvability of Problem \ref{pv} is given by the following existence and uniqueness result.

\begin{theorem}\label{t6}
	Assume $(\ref{be})$--$(\ref{gm})$.
	Then, Problem $\ref{pv}$ has a unique solution $(\bu,\blambda)\in C(\real_+;X\times \Lambda)$.
\end{theorem}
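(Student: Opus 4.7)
My plan is to show that Problem \ref{pv} is a special instance of Problem \ref{p}, with $f=\fb$ and with the right-hand side of the constraint $(\ref{m2})$ absorbed by the choice $h\equiv 0_X$, and then apply Proposition \ref{pr2}. Concretely, I would verify that the data $(A,\cS,b,\Lambda,\pi,\fb,0)$ constructed by $(\ref{Am})$--$(\ref{ff})$ satisfies assumptions $(\ref{A})$, $(\ref{S})$, $(\ref{b})$, $(\ref{lam})$, $(\ref{pi})$, $(\ref{f})$ and $(\ref{h})$. Once these checks are complete, Proposition \ref{pr2} delivers both existence and uniqueness in $C(\real_+;X\times\Lambda)$.

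The verification of $(\ref{A})$ is a direct computation on the bilinear form defining $A$: using Korn's inequality (which makes $\|\bv\|_X=\|\bvarepsilon(\bv)\|_{L^2(\Omega)^{d\times d}}$ an admissible norm on $X$) and $\eta\ge 0$, I would obtain $(A\bu-A\bv,\bu-\bv)_X\ge 2\beta\,\|\bu-\bv\|_X^2$, giving strong monotonicity with $m_A=2\beta$, while the Lipschitz bound $\|A\bu-A\bv\|_X\le(2\beta+d\eta)\|\bu-\bv\|_X$ follows by Cauchy--Schwarz. For $(\ref{S})$, the uniform bound $e^{-\omega(t-s)}\le 1$ for $\omega\ge 0$ together with Cauchy--Schwarz yields $\|\cS\bu_1(t)-\cS\bu_2(t)\|_X\le\int_0^t\|\bu_1(s)-\bu_2(s)\|_X\,ds$, i.e.\ $(\ref{S})$ with $s_m=1$. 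The continuity of $\pi$ in $(\ref{pi})$ follows from the continuity of the trace $\gamma_2:X\to L^2(\Gamma_2)^d$ and the canonical embedding of $X$ into $L^2(\Omega)^d$.

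The step I expect to be most delicate is the inf--sup part of $(\ref{b})$. Continuity of the bilinear form $b(\bv,\bmu)=\langle\bmu,\gamma\bv\rangle$ follows from the continuity of $\gamma:X\to\gamma(X)$, but the inf--sup requires noting that, because $\gamma(X)$ is a closed subspace of $\gamma(H^1(\Omega)^d)$, the surjection $\gamma:X\to\gamma(X)$ induces a Banach isomorphism $X/\ker\gamma\to\gamma(X)$ (open mapping theorem), which supplies the required constant $\alpha_b>0$; this is the argument used in \cite{ANZIAM} that I would invoke. For $(\ref{lam})$, $\Lambda$ is clearly convex, is closed as an intersection of closed half-spaces indexed by $\bxi\in\gamma(X)$ acting by weak-$*$ continuous functionals on $Y$, and contains $0_Y$ because $g\ge 0$.

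It remains to handle the data. Assumptions $(\ref{f0})$, $(\ref{f2})$ together with $(\ref{ff})$ give $\fb\in C(\real_+;Z)$, i.e.\ $(\ref{f})$. The constraint $(\ref{m2})$ corresponds to $(\ref{2})$ once we set $h(t)\equiv 0_X$, which trivially belongs to $C(\real_+;X)$, so $(\ref{h})$ holds. All hypotheses of Proposition \ref{pr2} are then met, and its conclusion is exactly the statement of Theorem \ref{t6}.
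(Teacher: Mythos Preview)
Your proposal is correct and follows essentially the same approach as the paper's own proof: both verify that the data $(A,\cS,b,\Lambda,\pi,\fb,h)$ defined by (\ref{Am})--(\ref{ff}) with $h\equiv 0_X$ satisfy assumptions (\ref{A})--(\ref{lam}), (\ref{pi}), (\ref{f}), (\ref{h}), obtaining the same constants $m_A=2\beta$, $L_A=2\beta+d\eta$, $s_m=1$, invoking \cite{ANZIAM} for the inf--sup condition, and then conclude via Proposition~\ref{pr2}. The only cosmetic difference is that you spell out the open-mapping argument behind the inf--sup condition, whereas the paper simply cites \cite{ANZIAM}.
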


\noindent
\begin{proof} Let $\bu$, $\bv$, $\bw\in X$.
We use definition (\ref{Am}), inequality
\[{\rm tr}\,(\btau) I_d\cdot \btau=\big({\rm tr}\,(\btau)\big)^2\geq 0\qquad\forall\,\btau\in\mathbb{S}^d\]
and  assumption (\ref{et}) to see that
\begin{equation}
\label{A1}(A\bu-A\bv,\bu-\bv)_{X}
\geq 2\beta\|\bu-\bv\|_X^2.
\end{equation}
On the other  hand,
assumptions (\ref{be}), (\ref{et}) and inequality
\[\|{\rm tr}\,(\btau) I_d\|\le d\,\|\btau\|\qquad\forall\,\btau\in\mathbb{S}^d\]
imply that
\[
(A\bu-A\bv,\bw)_X
\leq (2\beta+d\eta)\|\bu-\bv\|_X\|\bw\|_X
\]
and, therefore,
\begin{equation}
\label{A2}\|A\bu-A\bv\|_X\le(2\beta+d\eta)\|\bu-\bv\|_X.
\end{equation}
Inequalities (\ref{A1}) and (\ref{A2}) show that  the operator $A$ defined by  (\ref{Am}) satisfies condition (\ref{A})
with $m_A=2\beta$ and $L_A=2\beta+d\eta$.

Let $\bu,\,\bv\in C(\real_+;X)$ and $\bw\in X$. We use definition (\ref{Sm}) and
 the assumption (\ref{om}) to deduce that
 \begin{eqnarray*}
 &&(\cS\bu(t)-\cS\bv(t),\bw)_X
 \leq\Big(\int_0^te^{-\omega(t-s)}\|\bu(s)-\bv(s)\|_X\,ds\Big)\|\bw\|_X\\ [2mm]
 &&\qquad  \leq\Big(\int_0^t\|\bu(s)-\bv(s)\|_X\,ds\Big)\|\bw\|_X\qquad\forall\, t\in\real_+.
 \end{eqnarray*}
 This proves that
  \begin{equation*}
 \|\cS\bu(t)-\cS\bv(t)\|_X \le  \int_0^t\|\bu(s)-\bv(s)\|_X\,ds \qquad\forall\, t\in\real_+
 \end{equation*}
 which shows that  the operator  (\ref{Sm}) satisfies condition  (\ref{S}) with $s_m=1$, for each $m\in\mathbb{N}$.

Next, we claim that the form $b$  given by  (\ref{bm}) satisfies condition (\ref{b}). For the proof of this statement we refer the reader to \cite{ANZIAM}, for instance. Moreover, it is obvious to see that the operator $\pi$ defined by (\ref{pim}) satisfies condition (\ref{pi}). On the other hand,
assumption  (\ref{gm}) shows that the set $\Lambda$ defined by (\ref{Lam}) satisfies condition (\ref{lam}) and, finally, assumptions
(\ref{f0}), (\ref{f2})  imply (\ref{f}) for the element $\fb$  given by (\ref{ff}). Recall also that condition (\ref{h})  also holds, since $h$ vanishes. Therefore, Theorem \ref{t6} is now a direct consequence of Proposition \ref{pr2}. \end{proof}

\medskip A pair  $(\bu,\blambda)\in C(\real_+;X\times \Lambda)$ which satisfies (\ref{m1}) and  (\ref{m2}) for each $t\in\real_+$ is called a weak solution to Problem \ref{Pm}. We conclude from here that Theorem \ref{t6} provides sufficient conditions which guarantee the weak solvability of the contact problem (\ref{1m})--(\ref{5mm}).

\medskip We now study the continuous dependence of the solution to Problem \ref{pv} with respect to the data. To this end we assume that the density of body forces and traction are such that

\begin{eqnarray}
&&\label{f0m} \fb_0(t)=\theta(t)\wfb_0\qquad \forall\, t\in\real_+,\\
&&\label{f2m}\fb_2(t)=\zeta(t)\wfb_2\qquad \forall\, t\in\real_+
\end{eqnarray}
where  the functions $\theta\in C(\real_+;\real)$ and $\zeta\in C(\real_+;\real)$ are given and, moreover,
\begin{eqnarray}
&&\label{f0n} \wfb_0\in L^2(\Omega)^d,\\
&&\label{f2n} \wfb_2\in L^2(\Gamma_2)^d.
\end{eqnarray}
Note that in this case conditions (\ref{f0}) and (\ref{f2}) are satisfied.
We also consider the product space
$W=\mathbb{R}^3\times L^2(\Omega)^d\times L^2(\Gamma_2)^d\times\mathbb{R}$ endowed with the
canonical Hilbertian norm
and let
${U}$ be the subset of $W$ defined by
\begin{equation}
\label{U} {U}=\{\,p=(\beta,\eta,\omega,\wfb_0,\wfb_2,g)\in W\ :\ \beta,\, \eta,\, \omega,\, g >\delta_0\,\}
\end{equation}
where $\delta_0>0$ is given.
Moreover, for each $p=(\beta,\eta,\omega,\fb_0,\fb_2,g)\in {U}$ we denote by $(\bu_p,\blambda_p)$  the solution of Problem $\ref{pv}$ obtained in Theorem $\ref{t6}$.
Then, we have the following convergence result.

\begin{theorem}\label{t7}
	For each sequence $\{p_n\}\subset {U}$ such that
	${p_n} \rightharpoonup p\ {\rm in}\ W$,
	and for each $t\in \real_+$, the following convergences hold:
	\begin{eqnarray}
	&&\label{cvum}\bu_{p_n}(t)\to \bu_p(t)\qquad{\rm in}\quad X,\\ [2mm]
	&&\label{cvlam}\blambda_{p_n}(t)\rightharpoonup \blambda_p(t)\qquad\ {\rm in}\quad Y.
	\end{eqnarray}
\end{theorem}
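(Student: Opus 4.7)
The plan is to reduce the claim to a direct application of the abstract convergence result, Theorem \ref{t1}, to the sequence of mixed problems associated with the parameters $p_n$ and $p$. I would first unpack what $p_n \rightharpoonup p$ in $W = \mathbb{R}^3 \times L^2(\Omega)^d \times L^2(\Gamma_2)^d \times \mathbb{R}$ encodes: the finite-dimensional components converge strongly, so $\beta_n\to \beta$, $\eta_n\to\eta$, $\omega_n\to\omega$, $g_n\to g$, while $\wfb_{0,n}\rightharpoonup\wfb_0$ in $L^2(\Omega)^d$ and $\wfb_{2,n}\rightharpoonup\wfb_2$ in $L^2(\Gamma_2)^d$. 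Definition (\ref{U}) of $U$ provides uniform positive lower bounds on $\beta_n,\eta_n,\omega_n,g_n$, and weak convergence in $\mathbb{R}$ yields uniform upper bounds. With these established, the task reduces to checking each of the structural hypotheses $(\ref{cvA})$--$(\ref{cvvh})$ of Theorem \ref{t1} for the operators $A_{p_n}$, $\cS_{p_n}$, the form $b$, the sets $\Lambda_{p_n}$, and the functions $f_{p_n}$, $h_{p_n}$.

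Most of these verifications are routine from the explicit formulas (\ref{Am})--(\ref{ff}). For $(\ref{cvA})$ and $(\ref{bA})$, the estimate $\|A_{p_n}\bv - A_p\bv\|_X \le (2|\beta_n-\beta| + d|\eta_n-\eta|)\|\bv\|_X$ gives $F_n\to 0$ with $\delta_n=0$, while $m_n = 2\beta_n\ge 2\delta_0$ and $L_n = 2\beta_n + d\eta_n$ are uniformly bounded. For $(\ref{cvS})$ and $(\ref{Z})$, the mean-value estimate $|e^{-\omega_n r}-e^{-\omega r}|\le r|\omega_n-\omega|$ for $r\ge 0$ leads to $F_n^m = \tfrac{m^2}{2}|\omega_n-\omega|\to 0$, $\delta_n^m=0$, and $s_m^n = 1$ uniformly. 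Since the form $b$ does not depend on $p$, $(\ref{bb})$ holds trivially with the constants $\alpha_b$ and $M_b$ from \cite{ANZIAM}, and condition $(\ref{cvb})$ amounts to showing that whenever $\bz_n \rightharpoonup \bz$ in $X$ and $\bmu_n \rightharpoonup \bmu$ in $Y$ one has $b(\bw - \bz_n, \bmu_n) \to b(\bw - \bz, \bmu)$; this follows from the compactness of the trace $\gamma: X \to L^2(\Gamma)^d$ combined with the boundedness of $\{\bmu_n\}$ in $Y$. Condition $(\ref{cvpi})$ follows similarly from Rellich--Kondrachov. The product structure $\fb_{p_n}(t) = \theta(t)(\wfb_{0,n},\wfb_{2,n})$ together with continuity of $\theta$ yields $(\ref{cvvf})$, and $(\ref{cvvh})$ is automatic since the right-hand side of the friction inequality vanishes so $\bh_n\equiv\bzero$.

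The step deserving the most care is the Mosco convergence $(\ref{cvLa})$ of $\Lambda_{p_n}$ to $\Lambda_p$, since here the parameter $g_n$ enters nontrivially. For part (a), given $\bmu \in \Lambda_p$, I would consider the candidate $\bmu_n = (g_n/g)\bmu$, which is well defined because $g > \delta_0 > 0$; it lies in $\Lambda_{p_n}$ by direct substitution in (\ref{Lam}), and $\bmu_n \to \bmu$ strongly in $Y$ since $g_n/g \to 1$. For part (b), any sequence $\bmu_n \in \Lambda_{p_n}$ with $\bmu_n \rightharpoonup \bmu$ in $Y$ satisfies $\langle \bmu_n,\bxi\rangle \le g_n\int_{\Gamma_3}\|\bxi\|\,da$ for every fixed $\bxi \in \gamma(X)$, and passing to the limit via $g_n \to g$ yields $\bmu \in \Lambda_p$. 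Once all the hypotheses of Theorem \ref{t1} have been verified, the stated convergences (\ref{cvum}) and (\ref{cvlam}) follow immediately as a specialization of that theorem to the present setting.
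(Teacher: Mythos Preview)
Your proposal is correct and follows essentially the same route as the paper: verify the hypotheses $(\ref{cvA})$--$(\ref{cvvh})$ of Theorem~\ref{t1} for the operators $A_{p_n}$, $\cS_{p_n}$, the form $b$, the sets $\Lambda_{p_n}$ and the data $\fb_{p_n}$, $\bh_{p_n}\equiv\bzero$, then invoke that theorem. Your treatment is in fact more explicit than the paper's in a few places---the paper dismisses $(\ref{cvb})$ and $(\ref{bb})$ with ``obviously satisfied since $b_n=b$'' and handles $(\ref{cvLa})$ by the one-line remark $\Lambda_n=\frac{g_n}{g}\Lambda$, whereas you spell out the Mosco verification; one small slip is that $\fb_{p_n}(t)=(\theta(t)\wfb_{0,n},\,\zeta(t)\wfb_{2,n})$ rather than $\theta(t)(\wfb_{0,n},\wfb_{2,n})$, but this does not affect the argument.
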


\begin{proof}
Let $\{p_n\}\subset {U}$ be a sequence of elements in ${U}$ such that
\[p_n=(\beta_n,\eta_n,\omega_n,\wfb_{0n},\wfb_{2n}, g_n).\]
For each $n\in\mathbb{N}$,
let $A_n:X\to X$, $\cS_n:C(\real_+;X)\to C(\real_+;X)$, $\Lambda_n\subset Y$ and $\fb_n:\real_+\to Z$  be defined by equalities
\begin{eqnarray*}
&&\label{Amn} (A_n\bu,\bv)_X=\int_\Omega \Big(2\beta_n\bvarepsilon({\bu})+\eta_n\, {\rm tr}(\bvarepsilon({\bu}))I_d\Big)\cdot\bvarepsilon(\bv)\,dx\quad\forall\, \bu,\, \bv\in
X,\\[4mm]
&&\label{Smn} (\cS_n\bu(t),\bv)_X=\int_\Omega\Big(\int_0^te^{-\omega_n(t-s)}\bvarepsilon({\bu}(s))\,ds\Big)\cdot\bvarepsilon(\bv)\,dx\\[4mm]
&&\qquad\qquad\qquad\qquad\qquad\forall\, \bu\in C(\real_+;X),\ \bv\in
X,\nonumber \\[4mm]
&&\label{Lamn}\Lambda_n=\Big\{\bmu\in
Y\,:\,\langle\bmu,\,\bxi\rangle\leq g_n
\int_{\Gamma_3}\|\bxi\|da\quad \forall\, \bxi\in
\gamma(X)\Big\},\\[4mm]
&&\label{ffn}\fb_n(t)=(\theta(t)\wfb_{0n},\zeta(t)\wfb_{2n})\qquad\forall\, t\in\real_+.
\end{eqnarray*}
Moreover, for simplicity, denote  $\bu_{p_n}=\bu_n$ and $\blambda_{p_n}=\blambda_n$.
Then, it follows that $(\bu_n,\blambda_n)\in C(\real_+ ;X\times \Lambda_n)$ and, in addition,
\begin{eqnarray}
&&\hspace{-10mm}\label{m1n}(A_n\bu_n(t),\bv)_{X}+(\cS_n\bu_n(t),\bv)_{X}+b(\bv,\blambda_n(t))=(\fb_n(t),\,\pi\bv)_Z \quad\forall\, \bv\in X,\\ [2mm]
&&\hspace{-10mm}\label{m2n}b(\bu_n(t),\bmu-\blambda_n(t))\le 0\qquad       \forall\,\bmu\in
\Lambda_n.
\end{eqnarray}

Assume now that
\[p_n=(\beta_n,\eta_n,\omega_n,\wfb_{0n},\wfb_{2n}, g_n)\rightharpoonup p=(\beta,\eta,\omega,\wfb_0,\wfb_2,g)\quad{\rm in}\quad W,\]
which implies that
\begin{eqnarray}
&&\label{ww0}\beta_n\to \beta,\\ [2mm]
&&\label{ww00}\eta_n\to \eta,\\ [2mm]
&&\label{ww1}\omega_n\to \omega,\\ [2mm]
&&\label{ww2}\wfb_{0n}\rightharpoonup \wfb_0\quad{\rm in}\ \quad L^2(\Omega)^d,\\ [2mm]
&&\label{ww2n}
\wfb_{02}\rightharpoonup \wfb_2\quad{\rm in}\ \quad L^2(\Gamma_2)^d,\\ [2mm]
&&\label{ww3}g_n\to g.
\end{eqnarray}

Our aim in what follows is to apply Theorem \ref{t1} in the study of the mixed variational problems (\ref{m1n})--(\ref{m2n}) and (\ref{m1})--(\ref{m2}) and, to this end, in what follows we check the validity of
conditions (\ref{cvA})--(\ref{cvvh}).

First, we use the convergences (\ref{ww0}), (\ref{ww00}),  (\ref{ww1})  to see that condition (\ref{cvA})  is satisfied with
$F_n= 2\,|\beta_n-\beta|+ d\,|\eta_n-\eta|$ and $\delta_n=0$.
Moreover, equalities $m_n=2\beta_n$ and $L_n=2\beta_n+d\eta_n$
show that the condition (\ref{bA}) holds, too.

Let $n,\,m\in\mathbb{N}$, $t\in[0,m]$ and let  $\bv\in C(\real_+;V)$. Then, using (\ref{Smn}) and (\ref{Sm}) it is easy to see that
\begin{eqnarray}
&&\hspace{-6mm}\|\cS_n\bv(t)-\cS\bv(t)\|_X\le\Big|\Big(\int_0^te^{-\omega_n(t-s)}-e^{-\omega(t-s)}\Big)\,ds\Big|\displaystyle\Big(\max_{s\in[0,m]}\|\bv(s)\|_X\Big)\label{z3}\\ [3mm]
&&\hspace{-6mm}\quad\le \int_0^t|e^{-\omega_n(t-s)}-e^{-\omega(t-s)}|\,ds\,\displaystyle\Big(\max_{s\in[0,m]}\|\bv(s)\|_X\Big).\nonumber
\end{eqnarray}
Moreover, using the mean value theorem we deduce that for all $s\in[0,t]$ there exists $\xi_n(s)\ge0$ such that  following inequality  holds,
\[
|e^{-\omega_n(t-s)}-e^{-\omega(t-s)}|\le e^{-\xi_n(s)}(t-s)|\omega_n-\omega|
\]
Using now the inequality
\[ e^{-\xi_n(s)}(t-s)|\omega_n-\omega|\le m\,|\omega_n-\omega|\]
we deduce that
\begin{equation}\label{z4}
|e^{-\omega_n(t-s)}-e^{-\omega(t-s)}|\le m |\omega_n-\omega|.
\end{equation}
Finally, we combine (\ref{z3}) and (\ref{z4}) to deduce that condition (\ref{cvS}) holds with $F_n^m=m|\omega_n-\omega|$ and $\delta_n^m=0$.
Recall that, the proof of Theorem \ref{t6} reveals that $s_m^n=1$ for all $n,\, m\in \mathbb{N}$ and, therefore, condition (\ref{Z}) holds, too.

Next, conditions  (\ref{cvb}) and (\ref{bb}) are obviously satisfied since in our case $b_n$=$b$  for each $n\in\mathbb{N}.$
On the other hand, the compactness of the embedding $X\subset L^2(\Omega)^d$ combined with the compactness of the trace operator $\gamma_2:X\to L^2(\Gamma_2)^d$ shows that  the operator  (\ref{pim}) satisfies condition (\ref{cvpi}). Moreover, we note that $\Lambda_n=\frac{g_n}{g}\Lambda$.
Using now
the convergence (\ref{ww3}) it  is easy to see that  condition (\ref{cvLa}) holds.
In addition, we note that the convergences (\ref{ww2}) and  (\ref{ww2n}),  together with (\ref{f0m}) and (\ref{f2m}), imply (\ref{cvvf}) for $\fb_n$ and $\fb$ given by (\ref{ffn}) and (\ref{ff}), respectively.  Finally, note that, obviously, condition (\ref{cvvh}) is satisfied.

In follows from above that we are in   position to use Theorem \ref{t1} in order to deduce  that  the convergences (\ref{cvum}) and (\ref{cvlam}) hold for each $t\in\real_+$,
which concludes the proof.
\end{proof}


Besides the mathematical interest, the convergence results  (\ref{cvum}) and (\ref{cvlam}) are important from mechanical point of view since they provide   the continuous dependence of the weak solution of Problem \ref{Pm} with respect to the Lam\'e coefficients, the relaxation coefficient, the densities of the body forces and surface tractions, and the friction bound, at each time moment.

We now provide two examples of optimization problems associated to  Problem \ref{pv} for which the abstract result in Theorem \ref{t2} holds.  Everywhere below ${U}$ represents the set given by (\ref{U}).
The two problems we consider below
have a common feature and can be casted in the following general form.

\begin{problem}\label{opm}  Given $t\in\real_+$, find  $p^*\in U$ such that
	\begin{equation}\label{o1m}
	{\cal L}(\bu_{p^*}(t),\blambda_{p^*}(t),p^*)=\min_{p\in U} {\cal L}(\bu_p(t),\blambda_p(t),p).
	\end{equation}
\end{problem}

Here ${\cal L}:X\times Y\times U \to\mathbb{R}$ is the cost functional.
Both $U$ and ${\cal L}$ will change from example to example and, therefore,  will be described below. We also recall  that, given $p=(\beta,\eta,\omega,\wfb_0,\wfb_2,g)\in U$,
$(\bu_p,\blambda_p)$ represents
the solution of  Problem $\ref{pv}$ with the data $\beta$, $\eta$, $\omega$, $\fb_0$, $\fb_2$ and $g$ where $\fb_0$, $\fb_2$  are given by (\ref{f0m}) and (\ref{f2m}), respectively.
Note that the existence of this solution is guaranteed by
Theorem \ref{t6}. Moreover, it follows from the proof of Theorem
\ref{t7} that if $p_n\rightharpoonup p$ in $W$ then conditions $(\ref{cvA})$--$(\ref{cvLa})$ hold and, obviously, $(\ref{cvvf})$ and $(\ref{cvvh})$ hold, too. Therefore, the solvability of Problem \ref{opm}
follows from Theorem \ref{t2}, provided that conditions
$(\ref{o2})$, $(\ref{o3})$  and either
$(\ref{o4})$ or $(\ref{o5})$ are satisfied.

\medskip

\begin{example}\label{eo1}
	Let $\delta_1$, $\delta_2$,  $M_0$, $M_2$ be positive constants such that $\delta_1\le\delta_2$,  and consider a  function $\bu_0\in X$. Let
	$U$ and ${\cal L}:X\times Y\times U \to\mathbb{R}$ be defined by
	\begin{eqnarray*}
		&&U=\{\,p=(\beta,\eta,\omega,\wfb_0,\wfb_2,g)\in \widetilde{U}\ :\ \beta,\,\eta,\,\omega,\,g\in[\delta_1,\delta_2],\\
		&&\qquad\qquad \  \|\fb_0\|_{L^2(\Omega)^d}\le M_0,\ \|\fb_2\|_{L^2(\Gamma_2)^d}\le M_2\,\},\\ [2mm]
		&&{\cal L}(\bu,\blambda,p)=\int_{\Gamma_3}\|\bu-\bu_0 \|^2\,da\qquad\forall\, \bu\in X,\ \blambda\in\Lambda,\ p\in U.
	\end{eqnarray*}

	With this choice, the mechanical interpretation of  Problem $\ref{opm}$ is the following:
	given a contact process of the form $(\ref{1m})$--$(\ref{5mm})$, $(\ref{f0m})$--$(\ref{f2m})$ and a time moment $t\in\real_+$,
	we are looking for a set of data  $p=(\beta,\eta,\omega,\wfb_0,\wfb_2,g)\in U$
	such that
	the corresponding displacement on the contact surface  at $t$ is as close as possible to the ``desired  displacement" $\bu_0$.
	Note that in this case assumptions $(\ref{o2})$, $(\ref{o3})$
	and $(\ref{o5})$ are satisfied.  Therefore,  Theorem $\ref{t2}$ guarantees the existence of solutions to the  corresponding optimization  problem $\ref{opm}$.
	
\end{example}


\begin{example}\label{eo2}
	Let $\bu_0\in X$ and $\blambda_0\in Y$ be given,  and let $c_1$, $c_2$, $c_3$ be strictly positive constants. Moreover,
	consider the set $U$ defined by $(\ref{U})$ and the cost functional ${\cal L}:X\times Y\times U \to\mathbb{R}$  defined by
	\begin{eqnarray*}
		&&{\cal L}(\bu,\blambda,p)=c_1\|\bu-\bu_0\|_X^2+
		c_2\|\blambda-\blambda_0\|_Y^2+ c_3\|p\|_{W}^2\\
		&&\qquad\qquad\qquad\forall\, \bu\in X,\ \blambda\in\Lambda,\ p\in U.\nonumber
	\end{eqnarray*}

	With this choice, the mechanical interpretation of  Problem $\ref{opm}$ is the following:
	given a contact process of the form $(\ref{1m})$--$(\ref{5mm})$,  $(\ref{f0m})$--$(\ref{f2m})$ and a time moment $t\in\real_+$
	we are looking for a set of data  $p=(\beta,\eta,\omega,\wfb_0,\wfb_2,g)\in U$
	such that
	the corresponding state of the body  at $t$ is as close as possible to the ``desired  state" $(\bu_0,\blambda_0)$.
	Furthermore, this choice has to fulfill a minimum expenditure condition which is taken into account by the last  term in the functional ${\cal L}$. In fact, a compromise policy between the two aims (``$\bu$ close to $\bu_0$", ``$\blambda$ close to $\blambda_0$" and ``minimal data $p$") has to be found and the relative importance of each criterion with respect to the other is expressed by the choice of the weight coefficients $c_1$, $c_2$ and $c_3$.
	Note that in this case assumptions $(\ref{o2})$, $(\ref{o3})$
	and $(\ref{o4})$ are satisfied.  Therefore,  Theorem $\ref{t2}$ guarantees the existence of the solutions to the  corresponding  optimization problem   $\ref{opm}$.
	
\end{example}

\section*{Acknowledgements}

\indent This project has received funding from the European Union's Horizon 2020
Research and Innovation Programme under the Marie Sklodowska-Curie
Grant Agreement No. 823731 CONMECH.

\end{document}